\documentclass[reqno,11pt]{amsart}
\usepackage{amsmath,amsrefs,amssymb,color}
\numberwithin{equation}{section}

\DeclareMathOperator{\divergence}{div}

\newcommand{\R}{\mathbb{R}}
\renewcommand{\S}{\mathbb{S}}
\newcommand{\N}{\mathbb{N}}

\newcommand{\<}{\left<}
\renewcommand{\>}{\right>}
\renewcommand{\[}{\left[}
\renewcommand{\]}{\right]}
\renewcommand{\(}{\left(}
\renewcommand{\)}{\right)}

\newtheorem{theorem}{Theorem}[section]
\newtheorem{proposition}[theorem]{Proposition}

\begin{document}

\title[The sixth-order $Q$-curvature of conformal metrics in $\R^n$]{Positivity and non-positivity results for the sixth-order $Q$-curvature of conformal metrics in $\R^n$}

\author{J\'er\^ome V\'etois}

\address{J\'er\^ome V\'etois, Department of Mathematics and Statistics, McGill University, 805 Sherbrooke Street West, Montreal, Quebec H3A 0B9, Canada}
\email{jerome.vetois@mcgill.ca}

\author{Samuel Zeitler}

\address{Samuel Zeitler, Department of Mathematical Sciences, Carnegie Mellon University, Pittsburgh, Pennsylvania 15213, USA}

\email{szeitler@andrew.cmu.edu}

\thanks{The first author was supported by the NSERC Discovery Grant RGPIN-2022-04213.}

\date{July 17, 2026}

\begin{abstract}
Given $n,m\in\N$ such that $n\ge2m\ge4$, letting $g$ be a conformally Euclidean metric on $\R^n$, we consider the question of positivity of the lower-order $Q$-curvatures $Q_g^{\(2k\)}$ for $k\in\left\{1,\dotsc,m-1\right\}$ when $Q_g^{\(2m\)}$ is assumed to be nonnegative and not identically zero. We assume moreover that the scalar curvature of the metric $g$ is nonnegative near infinity if $n=2m$ or that $Q_g^{\(2m\)}$ satisfies a slow decay barrier condition near infinity if $n>2m$. Positive results for this question have been obtained by Gursky and Malchiodi~\cite{GM} for $m=2$ and $k=1$ in the context of closed manifolds with nonnegative scalar curvature and by Li and Xu~\cite{LX2} {and Li, Wei, and Xu~\cite{LWX}} for $m\ge2$ and $k\in\left\{1,\min\(m-1,2\)\right\}$ in the context of conformally Euclidean metrics on $\R^n$. {These results hold for all $n\ge2m$.} Considering the case where $m\ge4$ and $k=3$, we obtain a positive result for this question when $n\in\left\{2m,2m+1,\dotsc,4m-6\right\}${, namely for these dimensions, we obtain that if $Q_g^{\(2m\)}\ge0$ and $Q_g^{\(2m\)}\not\equiv0$ in $\R^n$, then $Q_g^{\(6\)}>0$}. On the other hand, in surprising contrast with the results of~{\cites{GM,LX2,LWX}}, we find that the answer {to this question} is negative when $k=3$ and $n\ge N_m$ for some $N_m\in\R$. In this case, we are able to construct examples of conformally Euclidean metrics such that $Q_g^{\(2m\)}$ is positive everywhere, but $Q_g^{\(6\)}$ is negative at some point. By stereographic projection, our examples extend to metrics conformal to the standard metric on $\S^n$.
\end{abstract}

\maketitle

\section{Introduction and main result}\label{Sec1}

Let $\varphi$ be a smooth and positive function {on $\R^n$} and $g=\varphi^2\left|dx\right|^2$ be a conformally Euclidean metric on $\R^n$, $n\ge2$. For every $m\in\N$ such that $n \geq 2m$, the $Q$-curvature of order $2m$ of the metric $g$ is given by 
\begin{equation}\label{Sec1Eq1}
Q_g^{\(2m\)}=\left\{\begin{aligned}&\frac{2}{n-2m}\,\varphi^{-\frac{n+2m}{2}}\(-\Delta\)^m\Big(\varphi^{\frac{n-2m}{2}}\Big)&&\text{if }n>2m\\&\varphi^{-n}\(-\Delta\)^m\(\ln\varphi\)&&\text{if }n=2m.\end{aligned}\right.
\end{equation}
These curvatures were introduced by Branson and \O rsted~\cites{BO} for $n=2m=4$ and extended by Branson~\cites{B} in general dimensions. They constitute a widely studied family of conformal invariants which naturally extend the scalar curvature to higher orders. Their construction is based on Fefferman and Graham's ambient metric~\cites{FG1,FG2}, as well as on Graham, Jenne, Mason, and Sparling's GJMS operators~\cite{GJMS}. 

\smallskip
The case where $Q_g^{\(2m\)}\equiv1$ has received special attention. In the context we are considering, this case is well understood: up to a finite total $Q$-curvature condition in the case where $n=2m$, the conformally Euclidean metrics solving this equation are known to be radially symmetric and have been fully classified. On this subject, we refer to the historical articles by Obata~\cite{O} and Caffarelli, Gidas, and Spruck~\cite{CGS} for $m=1$ and to the later articles by Lin~\cite{Lin}, Wei and Xu~\cite{WX}, and Martinazzi~\cite{M} for higher orders.

\smallskip
In this article, we are interested in the question of positivity of the lower-order $Q$-curvatures $Q_g^{\(2k\)}$ for $k\in\left\{1,\dotsc,m-1\right\}$ when $Q_g^{\(2m\)}$ is assumed to be nonnegative. In the case where $m=2$ and $k=1$, this question was answered positively by Gursky and Malchiodi~\cite{GM} in the context of a closed manifold with non-negative scalar curvature (see also in this case the article by Fazly, Wei, and Xu~\cite{FWX} where this result was obtained for a class of $Q$-curvature-type equations in $\R^n$). This result has proven to be useful in establishing several fundamental results. {A brief discussion of these and related results is presented at the end of the introduction.}

\smallskip
Gursky and Malchiodi's curvature positivity result was recently shown to remain valid in the context of conformally Euclidean metrics on $\R^n$ by Li and Xu~\cite{LX2} for {$n>2m$,} $m\ge2$ and $k\in\left\{1,\min\(m-1,2\)\right\}$ and by Li, Wei, and Xu~\cite{LWX} {for $n=2m$, $m\ge2$} and $k\in\left\{1,\min\(m-1,2\)\right\}$, {under a suitable assumption which differs for $n>2m$ and $n=2m$ (see (A) below)}. This remarkable improvement naturally led the authors of these articles to conjecture that this result should remain valid in the case where $1\le k\le m-1$ and $n\ge2m$. In this article, however, we show that the situation is surprisingly different for $k=3$. In this case, our results show that the positivity results of~{\cites{GM,LX2,LWX}} remain valid when $n$ is not too large relative to $m$, but is not true for all sufficiently large $n$. 

\smallskip
Throughout our work, as in the articles by Li and Xu~\cite{LX2} and Li, Wei, and Xu~\cite{LWX}, we make the following assumption:
\begin{itemize}
\item[(A)] If $n>2m$, then we assume that there exists $s\in\(-2m,0\)$ and $c,R\in\(0,\infty\)$ such that $Q_g^{\(2m\)}\ge c\left|x\right|^s$ for all $x\in\R^n\setminus B\(0,R\)$. If $n=2m$, then we assume that there exists $R\in\(0,\infty\)$ such that  $Q_g^{\(2\)}\ge0$ and $Q_g^{\(n\)}\ge0$ in $\R^n\setminus B\(0,R\)$. If $n=2m$, then we assume moreover that $Q_g^{\(n\)}\not\equiv0$ in $\R^n$.
\end{itemize}
{The last point in (A) is not assumed in \cite{LWX}, but it allows us to rule out a trivial case. Assuming (A), we can} use an integral representation formula for the conformal factor of the metric $g$. More precisely, by letting
$$u:=\left\{\begin{aligned}&\varphi^{\frac{n-2m}{2}}&&\text{if }n>2m\\&\ln\varphi&&\text{if }n=2m\end{aligned}\right.
$$
and applying Li and Xu's Theorem~2.6~\cite{LX2} for $n>2m$ and Li, Wei, and Xu's Theorems~3.8 and~3.9~\cite{LWX} for $n=2m$, we obtain
\begin{equation}\label{Sec1Eq2}
u\(x\)=\left\{\begin{aligned}&\int_{\R^n}\left|x-y\right|^{2m-n}d\mu_g^{\(2m\)}\(y\)&&\text{if }n>2m\\&\int_{\R^n}\ln\(\frac{\left|y\right|}{\left|x-y\right|}\)d\mu_g^{\(n\)}\(y\)+C&&\text{if }n=2m\end{aligned}\right.\quad\forall x\in\R^n,
\end{equation}
where $C\in\R$ is a constant, $d\mu_g^{\(2m\)}$ is the measure defined by
$$d\mu_g^{\(2m\)}\(y\):=\frac{\Gamma\(\frac{n}{2}-m+1\)}{2^{2m-1}\(m-1\)\,!\,\Gamma\(\frac{n}{2}\)\omega_{n-1}}\,Q_g^{\(2m\)}\(y\){u\(y\)^{\frac{n + 2m}{n-2m}}} dy\quad\forall y\in\R^n$$
if $ n > 2m $ and 
$$d\mu_g^{\(n\)}\(y\):=\frac{2}{(n-1)!\,\omega_{n}}\,Q_g^{\(n\)}\(y\){e^{nu\(y\)}} dy\quad\forall y\in\R^n$$
if $n=2m$. Moreover, we can differentiate up to $2m-1$ times the integral in \eqref{Sec1Eq2} under the integral sign. In the case where $n=2m$, this formula relates to the concept of normal metrics introduced by Finn~\cite{Fi} for $m=1$, extended to higher orders by Chang, Qing, and Yang~\cite{CQY}, Fang~\cite{Fa}, and Ndiaye and Xiao~\cite{NX}, and subsequently studied in several articles including the recent articles by Li~\cites{Li1,Li2,Li3}, Li, Wei, and Xu~\cite{LWX}, and Li and Xu~\cite{LX3}.

\smallskip
We obtain the following positivity result:

\begin{theorem}\label{Th1}
For every $m\ge4$, if $n\in\left\{2m, 2m + 1,\dotsc,4m-6\right\}$ and $g$ is a conformally Euclidean metric on $\R^n$ satisfying (A) and $Q_g^{\(2m\)}\ge0$ in $\R^n$, then $Q_g^{\(6\)}>0$ in $\R^n$.
\end{theorem}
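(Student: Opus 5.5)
The plan is to reduce the claim to a pointwise positivity statement for an explicit symmetric kernel acting on the positive measure $d\mu_g^{(2m)}$ given by \eqref{Sec1Eq2}. Fix a point $x\in\R^n$; by translation we may take $x=0$. First consider $n>2m$. Writing $\varphi^{\frac{n-6}{2}}=u^{\alpha}$ with $\alpha:=\frac{n-6}{n-2m}\ge1$, \eqref{Sec1Eq1} shows that $Q_g^{(6)}(0)>0$ is equivalent to $(-\Delta)^3(u^\alpha)(0)>0$.

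The first step is to expand $(-\Delta)^3(u^\alpha)$ by the chain rule. This gives a finite sum of terms of the form $c_{j}\,u^{\alpha-j}P_j(\nabla u,\nabla^2u,\dotsc,\nabla^6u)$, where $P_j$ is a contraction of total order six that is $j$-linear in derivatives of $u$. Examples are $u^{\alpha-1}(-\Delta)^3u$, $u^{\alpha-2}\nabla u\cdot\nabla\Delta^2u$, $u^{\alpha-2}|\nabla\Delta u|^2$, and so on down to $u^{\alpha-6}|\nabla u|^6$. Multiplying by $u^{6-\alpha}$ homogenizes everything to degree $6$ in $u$.

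Next, we insert \eqref{Sec1Eq2} and differentiate under the integral sign, which the paper allows up to order $2m-1\ge 7$. Each factor $\partial^\beta u(0)$ then becomes $\int \partial^\beta_x|x-y|^{2m-n}\big|_{x=0}\,d\mu(y)$. Each such kernel equals $|y|^{2m-n-|\beta|}$ times a polynomial in $\theta=y/|y|$, with coefficients depending on $n$ and $m$. Using that $\mu\ge0$ and $\mu\not\equiv0$, we obtain
\[
u^{6-\alpha}(-\Delta)^3(u^\alpha)(0)=\int_{(\R^n)^6}K(y_1,\dotsc,y_6)\,d\mu(y_1)\cdots d\mu(y_6),
\]
where $K$ is symmetrized over permutations of $(y_1,\dotsc,y_6)$. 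One point must be handled here: the term $u^{\alpha-1}(-\Delta)^3u$ involves $(-\Delta)^3|y|^{2m-n}=c_{n,m}|y|^{2m-n-6}$, with $c_{n,m}>0$ because $m\ge4$. Similar explicit formulas hold for $(-\Delta)^k|y|^{2m-n}$ and its gradients. After factoring out $\prod_i|y_i|^{2m-n}$, the kernel $K$ becomes a function of the directions $\theta_i$ and the weights $t_i:=|y_i|^{-1}$, homogeneous of degree $6$ in the $t_i$.

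The case $n=2m$ will be handled in parallel. There $\varphi^{\frac{n-6}{2}}=e^{\frac{n-6}{2}u}$ and the kernel is $\ln(|y|/|x-y|)$. Multiplying by $e^{-\frac{n-6}{2}u}$ removes the exponential entirely, and $(-\Delta)^3$ produces a polynomial in the derivatives of $u$. These derivatives are again integrals of $|y|^{-k}$ times polynomials in $\theta$, so the same structure arises, effectively with $\alpha$ replaced by the limiting exponent.

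The main obstacle is proving that the symmetrized kernel $K$ is nonnegative, and positive on a set of full measure, exactly when $n\le 4m-6$. My first move would be to reduce to the worst configurations. Because $K$ is a symmetric polynomial in the vectors $z_i:=t_i\theta_i$, integrating over directions where possible, and using the Cauchy--Schwarz inequality $(\sum z_i)\cdot(\sum z_j)\ge0$, should let me bound the mixed terms by diagonal ones. I expect the coefficient of the most dangerous term, namely the one coming from $u^{\alpha-6}|\nabla u|^6$ against the $|\nabla u|^2|\nabla\Delta u|\cdots$ cross terms, to produce a quadratic or cubic inequality in $\alpha$ and $n-2m$. That inequality should hold precisely when $n\le4m-6$, which would also explain the non-positivity for large $n$.

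If a direct sum-of-squares decomposition of $K$ is too unwieldy, I would fall back to a different estimate. The idea is to write $u^{\alpha}$ via \eqref{Sec1Eq2}, first treat a single Dirac mass (the bubble-like configuration), and then control the interaction terms using convexity of $s\mapsto s^\alpha$ together with Jensen's inequality.
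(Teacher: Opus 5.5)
\textbf{There is a genuine gap: the step that carries the whole theorem is missing.} Your reduction matches the paper's starting point. At $x=0$, with $z_i=y_i/|y_i|^2$, the paper's integrand is a cubic polynomial in $Z_{i,j}=|z_i-z_j|^2$ and $Z_i=|z_i|^2$, integrated against $\prod_i|y_i|^{2m-n}\,d\mu_g^{(2m)}(y_1)\cdots d\mu_g^{(2m)}(y_6)$. But the theorem \emph{is} the positivity of that kernel, and for this step you only describe what you expect to happen. What is actually needed is the following.
\begin{itemize}
\item[(i)] Compute the kernel explicitly. For $n>2m$ this includes the nontrivial cancellation of every term containing some $Z_i$, which leaves $2(m-3)\prod_i|y_i|^{2m-n}P_{n,m}(Z)$ with eight explicit coefficients.
\item[(ii)] Observe that these coefficients are all nonnegative only for $2m<n\le\frac{5m-3}{2}$ and a few special values of $n$.
\item[(iii)] In the remaining ranges up to $4m-6$, where one or four coefficients are negative, absorb the negative terms. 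The paper does this with specific symmetrized squares, such as $Z_{1,2}(Z_{3,4}-Z_{5,6})^2$, $Z_{1,2}(Z_{1,3}-Z_{5,6})^2$, $Z_{1,2}(Z_{1,3}-Z_{3,4})^2$, $Z_{1,2}(Z_{1,3}-Z_{1,4})^2$ and $Z_{1,2}(Z_{1,2}-Z_{3,4})^2$. It also uses the Cauchy--Schwarz inequality $|z_3-z_4|^2|z_5-z_6|^2\ge\langle z_3-z_4,z_5-z_6\rangle^2$; after symmetrization this shows that $Z_{1,2}(2Z_{1,3}Z_{5,6}-Z_{1,2}Z_{3,4})$ has nonnegative integral. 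It then checks explicit polynomial inequalities in $(n,m)$.
\end{itemize}
None of this is in your proposal, and the tools you name do not replace it. You cannot integrate over directions, since $\mu_g^{(2m)}$ is an arbitrary positive measure with no symmetry. The inequality $(\sum_iz_i)\cdot(\sum_jz_j)\ge0$ is just $|\sum_iz_i|^2\ge0$ and is not what is needed.

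Your heuristic that the kernel inequality holds ``precisely when $n\le4m-6$'' is also unfounded. The value $4m-6$ is where the factor $4m-n-6$ changes sign, an artifact of the method that the authors do not expect to be optimal. The counterexamples of Theorem~\ref{Th2} only appear for $n>N_m$, for instance $N_4\simeq29.1$ against $4m-6=10$.

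Your fallback does not work either. A single Dirac mass gives $u=c|x-p|^{2m-n}$, that is, the flat metric $c'|x-p|^{-4}|dx|^2$ with $Q^{(6)}\equiv0$. So the sign comes entirely from interaction terms. Convexity of $s\mapsto s^\alpha$ and Jensen's inequality give no control on the sign of the sixth-order quantity $(-\Delta)^3(u^\alpha)$.

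\textbf{The case $n=2m$ is not ``the same structure with the limiting exponent''.} Expanding $e^{-\frac{n-6}{2}u}(-\Delta)^3e^{\frac{n-6}{2}u}$ produces integrals over $\R^{kn}$ for different $k$. To merge them into one kernel you must use $\int_{\R^{(k+l)n}}f\,d\mu_g^{(n)}=\lambda^k\int_{\R^{ln}}f\,d\mu_g^{(n)}$ with $\lambda=\mu_g^{(n)}(\R^n)$. The resulting coefficients carry the factors $2-\lambda$, $n-2-\frac{n-6}{2}\lambda$ and $n-4-\frac{n-6}{2}\lambda$, so positivity requires $0<\lambda\le2$.

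That bound comes from the assumption $Q_g^{(2)}\ge0$ near infinity in (A), via Li--Wei--Xu~\cite{LWX}. Your argument never uses this hypothesis, and it cannot be dropped. Take $u$ as in \eqref{Sec1Eq2} with a compactly supported nonnegative density of mass $\lambda$ slightly above $2$. Then $g$ is asymptotic to a multiple of $|x|^{-2\lambda}|dx|^2$. We have $(-\Delta)^3|x|^a=-a(a-2)(a-4)(a+n-2)(a+n-4)(a+n-6)|x|^{a-6}$, and with $a=-\frac{n-6}{2}\lambda$ we get $a+n-6=(n-6)(1-\frac{\lambda}{2})<0$ while the other factors keep their signs. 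Hence $Q_g^{(6)}<0$ near infinity for such metrics, even though $Q_g^{(n)}\ge0$.
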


On the other hand, for large values of $n$, we obtain the following {non-positivity} result:

\begin{theorem}\label{Th2}
For every $m\in\N$ such that $m \geq 4$, let $N_m$ be the largest real root of the polynomial
\begin{align}\label{Th2Eq1}
\gamma_m\(n\):=&n^5-19\(m-1\)n^4+\(117m^2-212m+92\)n^3-\(337m^3-851m^2\right.\nonumber\\
&\left.+652m-140\)n^2+2\(m-1\)\(241m^3-518m^2+264m-24\)n\nonumber\\
&-4m\(m-1\)\(70m^3-187m^2+133m-24\).
\end{align}
Then for every $n>N_m$, there exists a conformally Euclidean metric $g$ on $\R^n$ such that $Q_g^{\(6\)}\(0\)<0$ and $C^{-1}\le Q_g^{\(2m\)}\le C$ in $\R^n$ for some constant $C>0$. Additionally, by stereographic projection, $g$ extends to a smooth metric on $\S^n$. Furthermore, $N_m\sim\Lambda m$ as $m\to\infty$ and {$N_m<\Lambda m$} for all $m\in\N$ with $ m \geq 4$, where $\Lambda\simeq10.55$ is the largest real root of the polynomial $X^4-17X^3+83X^2-171X+140$. 
\end{theorem}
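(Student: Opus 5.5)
The plan is to build the metric explicitly as a radial perturbation of the round metric, using the representation \eqref{Sec1Eq2} in reverse. For $n>2m$, set $s:=\frac{n-2m}{2}$. Let $U(x)=(1+|x|^2)^{-s}$, so that $U^{2/(n-2m)}|dx|^2$ is the round metric, and $(-\Delta)^mU=c_{n,m}U^{\frac{n+2m}{n-2m}}$. Define
$$u(x):=\int_{\R^n}|x-y|^{2m-n}\,h(|y|)\,(1+|y|^2)^{-\frac{n+2m}{2}}\,dy,$$
where $h$ is smooth and radial with $h\ge1$ everywhere and $h\equiv1$ outside a compact set. Then $(-\Delta)^mu=c\,h\,(1+|y|^2)^{-\frac{n+2m}{2}}$. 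Since $u$ is comparable to $U$ (above and below), $Q_g^{(2m)}=c'\,h\,(1+|y|^2)^{-\frac{n+2m}{2}}u^{-\frac{n+2m}{n-2m}}$ is bounded between two positive constants. Because $h-1$ has compact support, the Kelvin transform of $u$ is the round term plus the potential of a compactly supported smooth measure, which is smooth at the origin. Hence $g$ extends smoothly to $\S^n$ by stereographic projection.

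Next, choose $h=1+M\eta_\varepsilon$, with $\eta_\varepsilon$ a normalized bump concentrating on the sphere $|y|=1$, and let $M\to\infty$. Since $Q_g^{(6)}(0)$ is homogeneous in $u$ (with $\varphi=u^{2/(n-2m)}$), the limit of its sign equals the sign of $Q^{(6)}(0)$ computed for the shell potential
$$v(x)=\fint_{\S^{n-1}}|x-\theta|^{-2s}\,d\theta={}_2F_1\Big(s,\,s-\tfrac n2+1;\,\tfrac n2;\,|x|^2\Big)=a_0+a_1r^2+a_2r^4+a_3r^6+O(r^8).$$
The coefficients $a_k$ are explicit rational functions of $n$ and $m$. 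Since $v$ is positive and smooth near $0$, it suffices to find $n$ for which this limiting value is negative; a large finite $M$ then gives $Q_g^{(6)}(0)<0$.

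The core computation, and what I expect to be the main obstacle, is expressing $Q^{(6)}$ at the origin for a radial conformal factor $\varphi=v^{1/s}$ in terms of $a_0,\dots,a_3$. I would do this with \eqref{Sec1Eq1} for $m=3$: $Q^{(6)}_g=\frac{2}{n-6}\varphi^{-\frac{n+6}{2}}(-\Delta)^3\big(\varphi^{\frac{n-6}{2}}\big)$. Write $w:=\varphi^{\frac{n-6}{2}}=v^{\frac{n-6}{n-2m}}$, expand $w$ in powers of $r^2$ to order $r^6$ using the Taylor series of $t\mapsto(a_0+t)^p$, and use $(-\Delta)^3r^6\big|_0=-48\,n(n+2)(n+4)$. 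This gives $Q^{(6)}(0)$ as a cubic polynomial expression in the ratios $a_1/a_0$, $a_2/a_0$, $a_3/a_0$, with coefficients polynomial in $p=\frac{n-6}{n-2m}$. After substituting the hypergeometric coefficients and clearing positive factors, I expect the numerator to reduce to $-\gamma_m(n)$ times a positive quantity. This would give negativity exactly when $\gamma_m(n)>0$, i.e.\ for $n>N_m$ (using that the leading coefficient is positive). If the shell at radius $1$ alone does not yield exactly $\gamma_m$, I would optimize over the radius of the shell combined with the bubble, but scale invariance suggests the radius is irrelevant.

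Finally, for the asymptotics I would substitute $n=Xm$ into $\gamma_m$. The top-order term in $m$ is $m^5(X^5-19X^4+117X^3-337X^2+482X-280)$, which factors as $m^5(X-2)(X^4-17X^3+83X^2-171X+140)$. This gives $N_m/m\to\Lambda$. The bound $N_m<\Lambda m$ would follow by checking that $\gamma_m(n)>0$ for all $n\ge\Lambda m$. I would do this by expanding $\gamma_m(\Lambda m+t)$ for $t\ge0$ and verifying that the lower-order terms in $m$ have the right sign for $m\ge4$, using the factorization above.
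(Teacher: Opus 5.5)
There is a genuine gap: the single shell you concentrate on makes $Q^{(6)}_g(0)$ \emph{positive}, not negative, for every $n>4m-6$. Since $N_m>4m-6$, this covers the whole range $n>N_m$.

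You can check this with Proposition~\ref{Pr} at $x=0$. Rescaling $u$ changes $Q^{(6)}_g$ only by a positive factor. So after normalizing, letting $M\to\infty$ and letting the bump concentrate, $d\mu_g^{(2m)}$ becomes the uniform probability $\sigma$ on $\S^{n-1}$. Then $\prod_i|y_i|^{2m-n}=1$ and $Z_{i,j}(0,\theta)=|\theta_i-\theta_j|^2=2-2\langle\theta_i,\theta_j\rangle$. The moments you need are:
\begin{itemize}
\item $\int\langle\theta_1,\theta_2\rangle^2\,d\sigma^{\otimes2}=1/n$, and all odd moments vanish;
\item the inner products along the edges of a tree are independent;
\item $\int\langle\theta_1,\theta_2\rangle\langle\theta_1,\theta_3\rangle\langle\theta_2,\theta_3\rangle\,d\sigma^{\otimes3}=1/n^2$.
\end{itemize}
With $a_1,\dots,a_8$ the coefficients of $P_{n,m}$ in the listed order, this gives
$$\frac{1}{8}\int P_{n,m}(Z(0,\theta))\,d\sigma^{\otimes6}(\theta)=\sum_{i=1}^8a_i+\frac{3a_1+a_2+a_3}{n}-\frac{a_5}{n^2},$$
where
$$\sum_{i=1}^8a_i=(m-3)^2n^2+4(3m^2-15m+14)n+4(m-1)(13m-19),$$
$3a_1+a_2+a_3=3(n-2m+2)^2\big((m-3)(n-3m+3)+(3m-5)(m+1)\big)$, and $-a_5=2(n-2m+2)^3(n-4m+6)$. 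All three are positive for $m\ge4$ and $n>4m-6$.

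For example, take $m=4$, $n=40>N_4$ and $u=v$ near the origin. The formula gives $Q^{(6)}_g(0)=16\,(2316+5722.2+1473.9)\approx1.5\times10^5$. Your own Taylor route agrees: $v=1-\frac{12}{5}\rho+\frac{68}{35}\rho^2-\frac{204}{385}\rho^3$ and $p=\frac{17}{16}$ give $w_3\approx-0.73<0$, hence $Q^{(6)}_g(0)>0$. So the hoped-for reduction to $-\gamma_m(n)$ times a positive factor is false. Your fallback does not help either: the shell radius is irrelevant by scaling, as you note, and the bubble drops out as $M\to\infty$.

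The missing idea is to break the symmetry with two peaks of different heights. The paper uses $h_{r,\varepsilon}=\varepsilon(1+|x|^2)^{-\frac{n+2m}{2}}+r\rho_\varepsilon(x-e_1)+\rho_\varepsilon(x+e_1)$. As $\varepsilon\to0$, $Q^{(6)}_{g_{r,\varepsilon}}(0)$ tends to a positive constant times $K_{n,m}(r)=\sum_{\alpha\in\{e_1,-e_1\}^6}r^{\theta_\alpha}P_{n,m}(Z(0,\alpha))$. Since $Z_{i,j}(0,\alpha)\in\{0,4\}$, this is the palindromic quintic $64r(b_0+b_1r+b_2r^2+b_1r^3+b_0r^4)$, with $b_0=8(m-1)(m-2)(n-2m+2)(n-2m+4)>0$.

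In the variable $s=r+r^{-1}\ge2$, $r^{-3}K_{n,m}(r)/64$ becomes the quadratic $b_0s^2+b_1s+b_2-2b_0$. This takes negative values on $[2,\infty)$ if two conditions hold:
\begin{itemize}
\item its discriminant, which equals $64(n-2m+2)(n-3m+3)(n-4m+6)\gamma_m(n)$, is positive;
\item $b_1+4b_0\le0$.
\end{itemize}
The first condition is exactly where $\gamma_m$ and $N_m$ come from. The second follows from a Euclidean division of $\gamma_m$ by the cubic factor of $b_1+4b_0$, with a negative remainder.

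The unequal weights are essential. For $m=4$, $n=40$, equal weights give $K_{n,m}(1)>0$, and negativity occurs only for $r+r^{-1}$ roughly between $6$ and $93$.

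The rest of your plan is in line with the paper: the two-sided bounds on $Q^{(2m)}_g$, the smooth extension to $\S^n$, and the factorization $(X-2)(X^4-17X^3+83X^2-171X+140)$ of the top-order part for the asymptotics.
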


For small values of $m$, numerical computations give $N_4\simeq29.1$, $N_5\simeq39.7$, $N_6\simeq50.3$, $N_7\simeq60.9$, $N_8\simeq71.4$, $N_9\simeq82$, and $N_{10}\simeq92.6$.

\smallskip
Both the proofs of Theorems~\ref{Th1} and~\ref{Th2} rely on an integral formula for $Q_g^{\(6\)}$ which we derive in Section~\ref{Sec2}. We then prove Theorem~\ref{Th1} in Section~\ref{Sec3} and Theorem~\ref{Th2} in Section~\ref{Sec4}. Some comments are in order. The proof of Theorem~\ref{Th1} reduces to studying the positivity of a cubic polynomial of multiple nonnegative variables. Note that this is clearly more difficult than in the cases of $Q_g^{\(2\)}$ and $Q_g^{\(4\)}$, where the polynomial {to study} is of degree 1 and 2, respectively. The proof is then divided into several cases. When $2m+1\le n\le \frac{5m-3}{2}$ or $n\in\left\{\frac{8m-6}{3},3m-3,4m-6\right\}$, all the coefficients of the polynomial are nonnegative. When $\frac{5m-3}{2}<n<\frac{8m-6}{3}$ or $\frac{8m-6}{3}<n<3m-3$, only one coefficient is negative. The most difficult case that we deal with in Theorem~\ref{Th1} is the case where $3m-3<n<4m-6$, in which four coefficients are negative. In the remaining case where $n>4m-6$, three coefficients are negative. While we do not think that the dimension $4m-6$ is optimal in Theorem~\ref{Th1}, it is clear from Theorem~\ref{Th2} that the result does not remain valid for all $n>4m-6$. The proof of Theorem~\ref{Th2} relies {again on the study of the cubic polynomial obtained in Section~\ref{Sec2}. For large values of $n$}, we manage to obtain our non-positivity result for $Q_g^{\(6\)}$ by {constructing a suitable family of conformal metrics with peaks of different heights at two different points}.

{\smallskip
We conclude this introduction by briefly mentioning some results in the context of closed manifolds which have been obtained thanks to Gursky and Malchiodi's curvature positivity result, as well as some related results. First, Gursky and Malchiodi~\cite{GM} used this result to obtain positivity results on the associated Paneitz--Branson operator and on its Green's function, which in turn allowed them to obtain an existence result for the constant fourth-order $Q$-curvature problem. In this regard, we also mention the article by Hang and Yang~\cite{HY1} where a different approach is used for these results and the positive scalar curvature assumption is relaxed. More generally, positivity properties of GJMS operators and of their Green's functions are known to have applications to several fundamental results including existence, quantitative stability, and compactness results of metrics with prescribed $Q$-curvature or, more generally, positive solutions to $Q$-curvature-type equations. In addition to the already mentioned articles~\cites{GM,HY1}, earlier and later references on existence results include articles by Djadli, Hebey and Ledoux~\cite{DHL}, Esposito and Robert~\cite{ER}, Robert~\cites{R1,R2}, Hang and Yang~\cites{HY2}, Gursky, Hang, and Lin~\cite{GHL}, Chen and Hou~\cite{CH}, Mazumdar and V\'etois~\cite{MV}, and Mazumdar and Ndiaye~\cite{MN}; see also the article by Hyder and Sire~\cite{HS} on the existence of singular solutions. A reference on quantitative result is by Andrade, K\"onig, Ratzkin, and Wei~\cite{AKRW}. As regards compactness results, we refer to the articles by Hebey and Robert~\cite{HR}, Hebey, Robert, and Wen~\cite{HRW}, Li and Xiong~\cite{LX1}, Gong, Kim, and Wei~\cite{GKW}, and Mazumdar and Premoselli~\cite{MP}. We now mention another type of applications of Gursky and Malchiodi's curvature positivity result. As can be seen in the articles by V\'etois~\cite{V}, Case~\cite{C}, and Li and Wei~\cite{LW}, this result has also proven to be useful in establishing uniqueness results of conformal metrics with prescribed $Q$-curvature or, again, more generally, positive solutions to $Q$-curvature-type equations. We refer to the article by Case and Gover~\cite{CG} for a recent survey on GJMS operators and $Q$-curvatures. We also mention a recent article by Ge, Wang, and Wei~\cite{GWW} where analogues of the above-mentioned existence and uniqueness results have been obtained for a problem involving the quotient between the $Q$-curvature of order 4 and the scalar curvature.}

\section{An integral formula for $Q_g^{\(6\)}$}\label{Sec2}

{Let $n,m\in\N$ be such that $n\ge 2m\ge8$ and $g$ be a conformally Euclidean metric on $\R^n$ satisfying (A).} Throughout this section and the next, {we use the notation
$$d\mu_g^{\(2m\)}\(y\):=d\mu_g^{\(2m\)}\(y_1\)\times\dotsb\times d\mu_g^{\(2m\)}\(y_l\)$$
for all $l\in\N$ and $y=\(y_1,\dotsc,y_l\)\in\R^{ln}$. We denote} $y=y_1$ if $l=1$ and $y=\(y_1,\dotsc,y_l\)$ if $l\ge2$. For conciseness, we use the notation 
$$\left|x - y\right|^{\[\theta_1, \dots, \theta_l\]}  := \prod_{i=1}^l \left|x - y_i\right|^{\theta_i}  $$ 
for all $l\in\N$, $\theta_1,\dotsc,\theta_l\in\R$, $x\in\R^n$, and $y=\(y_1,\dotsc,y_l\)\in\R^{ln}$.
We also define, for $ i, j \in \mathbb{N} $ with $ i \neq j $, $$Z_{i,j}\(x,y\) :=\frac{\left|y_i - y_j\right|^2}{\left|x - y_i\right|^{2}\left|x - y_j\right|^{2}}\quad\text{and}\quad Z_i\(x,y\) :=\frac{1}{\left|x - y_i\right|^{2}} $$ 
for all $x,y_i, y_j \in\R^n$. We note 
$$\frac{\< x - y_i, x - y_j \>}{\left|x - y_i\right|^{2}\left|x - y_j\right|^{2}}=\frac{1}{2}\(Z_i\(x,y\)+Z_j\(x,y\)-Z_{i,j}\(x,y\)\),$$
which gives (assuming we can differentiate under the integral sign)
\begin{align}
&\Delta \(\int_{\mathbb{R}^{ln}}  {\left|x - y\right|^{\[\theta_1, \dots, \theta_l\]}}f\(y\){d\mu_g^{\(2m\)}\(y\)}\)\nonumber\\
&\quad=\int_{\mathbb{R}^{ln}}\left|x - y\right|^{\[\theta_1,\dots, \theta_l\]}f\(y\)\Bigg(\(\sum_{i=1}^l\theta_i+n-2\)\(\sum_{i=1}^l\theta_iZ_i\(x,y\)\)\nonumber\\
&\qquad-\sum_{1 \leq i < j \leq l} \theta_i\theta_jZ_{i,j}\(x,y\)\Bigg){d\mu_g^{\(2m\)}\(y\)},\label{general laplacian}\\
&\Delta u\(x\)=-\(2m-2\)\max\(n-2m,1\)\int_{\mathbb{R}^n}\left|x - y\right|^{2m-n-2}{d\mu_g^{\(2m\)}\(y\)},\label{laplacian}
\end{align}
and 
\begin{align}\label{general inner product}
&\< \nabla u\(x\), \nabla \(\int_{\mathbb{R}^{ln}}  {\left|x - y\right|^{\[\theta_1, \dots,\theta_l\]}}f\(y\){d\mu_g^{\(2m\)}\(y\)} \) \>\nonumber\\
&\quad=-\frac{1}{2}\max\(n-2m,1\)\int_{\mathbb{R}^{\(l+1\)n}}\left|x - y\right|^{\[\theta_1,\dots, \theta_l , 2m-n\]}f\(y\)\sum_{i=1}^{l} \theta_i\(Z_i\(x,y\) \right.\nonumber\\
&\qquad\left. + Z_{l+1}\(x,y\)- Z_{i, l+1}\(x,y\)\){d\mu_g^{\(2m\)}\(y\)}, 
\end{align}
where $u$ is as in \eqref{Sec1Eq2}.

\smallskip
This section is devoted to proving the following result:

\begin{proposition}\label{Pr}
Let $n,m\in\N$ be such that $n\ge 2m\ge8$ and $g$ be a conformally Euclidean metric on $\R^n$ satisfying (A). If $n > 2m$, then
\begin{align}\label{formula laplacian cubed}
Q_g^{\(6\)}\(x\)&= 2\(m-3\)\int_{\mathbb{R}^{6n}} \prod_{i=1}^6 \left|x - y_i\right|^{2m - n}P_{n,m}\(Z\(x,y\)\){d\mu_g^{\(2m\)}\(y\)},
\end{align}   
where $Z:=\(Z_{i,j},Z_i\)_{1\le i < j\le 6}$ and $P_{n,m}$ is the cubic polynomial defined by
\begin{align*}
&P_{n,m}\(Z\)\\
&\quad:=Z_{1,2}\big(\(2m-n-2\)^2\(2m-n-4\)^2 Z_{1,2}^2\\
&\qquad- 6\(4m - n - 6\)\(2m-n-2\)^2\(2m-n-4\)Z_{1,2} Z_{1,3}\allowdisplaybreaks\\
&\qquad+ 3\(4m - n - 6\)\(3m - n - 3\)\(2m-n-2\)^2 Z_{1,2}Z_{3,4}\allowdisplaybreaks\\
&\qquad+4\(4m - n  - 6\)\(3m - n - 3\)\(2m - n - 2\)\(2m - n - 4\)Z_{1,3}Z_{1,4}\allowdisplaybreaks\\ 
&\qquad-2\(4m - n - 6\)\(2m - n - 2\)^3Z_{1,3}Z_{2,3}\allowdisplaybreaks\\
&\qquad+12\(4m - n - 6\)\(3m - n - 3\)\(2m-n-2\)^2Z_{1,3} Z_{3,4}\allowdisplaybreaks\\
&\qquad- 6\(4m - n - 6\)\(3m - n - 3\)\(8m-3n-6\)\(2m - n - 2\)Z_{1,3} Z_{5,6}\\ 
&\qquad+\(4m - n - 6\)\(3m - n - 3\)\(5m - 2n - 3\)\(8m-3n-6\)Z_{3,4} Z_{5,6}\big).
\end{align*}
 If $ n = 2m$, then 
\begin{align}\label{formula laplacian cubed n equals 2m}
Q_g^{\(6\)}\(x\)&= e^{-6u} \int_{\mathbb{R}^{6n}} P_{n,m}\(Z\(x,y\)\){d\mu_g^{\(n\)}\(y\)},
\end{align}    
where $Z:=\(Z_{i,j},Z_i\)_{1\le i < j\le 6}$ and $P_{n,m}$ is defined by
\begin{align*}
&P_{n,m}\(Z\) \\
 &:=  \frac{4\(n-6\)\(2-\lambda\)}{\lambda^5}\(n - 2 - \frac{n-6}{2}\lambda\)\(n - 4 - \frac{n-6}{2}\lambda\)Z_1^3\\
 &\ + \frac{6\(n-6\)^2\(2-\lambda\)}{\lambda^4}\(n - 4 - \frac{n-6}{2}\lambda\) Z_1^2 Z_{1,2}\allowdisplaybreaks\\
 &\ + \frac{3\(n-6\)^2\(2-\lambda\)}{2\lambda^4}\(n - 2 - \frac{n-6}{2}\lambda\)\(n - 4 - \frac{n-6}{2}\lambda\) Z_1^2 Z_2\allowdisplaybreaks\\
 &\ + \frac{3\(n-6\)^3\(2-\lambda\)}{8\lambda^3}\(n - 4 - \frac{n-6}{2}\lambda\) Z_1^2 Z_{2,3} 
+ \frac{12\(n-6\)^2\(2-\lambda\)}{\lambda^4}  Z_1 Z_{1,2}^2\allowdisplaybreaks\\
 &\ + \frac{3\(n-6\)^3\(2-\lambda\)}{\lambda^3} Z_1 Z_{1,2} Z_{1,3}\allowdisplaybreaks\\
 &\ + \frac{3\(n-6\)^2\(2-\lambda\)}{\lambda^4}\(n - 4 - \frac{n-6}{2}\lambda\) Z_1 Z_2 Z_{1,2}\allowdisplaybreaks\\
 &\ + \frac{3\(n-6\)^3\(2-\lambda\)}{\lambda^3} Z_1 Z_{1,2} Z_{2,3} 
+ \frac{3\(n-6\)^4\(2-\lambda\)}{8\lambda^2} Z_1 Z_{1,2} Z_{3,4}\allowdisplaybreaks\\
 &\ + \frac{3\(n-6\)^3\(2-\lambda\)}{2\lambda^3}\(n - 4 - \frac{n-6}{2}\lambda\) Z_1 Z_2 Z_{1,3}\allowdisplaybreaks\\
 &\ + \frac{\(n-6\)^3\(2-\lambda\)}{8\lambda^3}\(n - 2 - \frac{n-6}{2}\lambda\)\(n - 4 - \frac{n-6}{2}\lambda\) Z_1 Z_2 Z_3\allowdisplaybreaks\\
 &\ + \frac{3\(n-6\)^4\(2-\lambda\)}{32\lambda^2}\(n - 4 - \frac{n-6}{2}\lambda\)Z_1 Z_2 Z_{3,4} 
\allowdisplaybreaks\\
 &\ +\frac{3\(n-6\)^3\(2-\lambda\)}{4\lambda^3}  Z_1 Z_{2,3}^2+ \frac{3\(n-6\)^4\(2-\lambda\)}{8\lambda^2} Z_1 Z_{2,3} Z_{2,4}\allowdisplaybreaks\\
 &\ + \frac{3\(n-6\)^5\(2-\lambda\)}{128\lambda} Z_1 Z_{2,3} Z_{4,5} + \frac{16\(n-6\)}{\lambda^4} Z_{1,2}^3 
+ \frac{12\(n-6\)^2}{\lambda^3} Z_{1,2}^2 Z_{1,3}\allowdisplaybreaks\\
 &\ + \frac{3\(n-6\)^3}{8\lambda^2} Z_{1,2}^2 Z_{3,4} + \frac{\(n-6\)^3}{\lambda^2} Z_{1,2} Z_{1,3} Z_{1,4} + \frac{2\(n-6\)^2}{\lambda^3} Z_{1,2} Z_{1,3} Z_{2,3}\\
 &\  + \frac{3\(n-6\)^3}{2\lambda^2} Z_{1,2} Z_{1,3} Z_{3,4} + \frac{3\(n-6\)^4}{16\lambda} Z_{1,2} Z_{1,3} Z_{5,6}+ \frac{\(n-6\)^5}{256} Z_{1,2} Z_{3,4} Z_{5,6},
\end{align*}
where $ \lambda  := \mu_g^{\(2m\)}\(\R^n\)$.
\end{proposition}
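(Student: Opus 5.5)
We would start from the conformal covariance of $Q^{(6)}$. Write $g=\varphi^2|dx|^2$ with $\varphi=u^{\frac{2}{n-2m}}$ if $n>2m$, and $\varphi=e^u$ if $n=2m$. By \eqref{Sec1Eq1} with $m$ replaced by $3$,
$$Q_g^{(6)}=\frac{2}{n-6}\,\varphi^{-\frac{n+6}{2}}(-\Delta)^3\Big(\varphi^{\frac{n-6}{2}}\Big).$$
Set $w:=\varphi^{\frac{n-6}{2}}$. For $n>2m$ this gives $w=u^{\alpha}$ with $\alpha=\frac{n-6}{n-2m}$. For $n=2m$ it gives $w=e^{\frac{n-6}{2}u}$. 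The task is to expand $-\Delta^3 w$ using the chain rule and express every derivative of $u$ through the representation formula \eqref{Sec1Eq2}. The identities \eqref{general laplacian}, \eqref{laplacian}, and \eqref{general inner product} are exactly the tools for this, and differentiation under the integral sign is allowed up to order $2m-1\ge 7>6$.

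\textbf{Step 1.} In the case $n>2m$, expand
$$\Delta^3(u^\alpha)=\sum c_{\beta}\,u^{\alpha-j}\,(\text{polynomial in } \Delta u,\ |\nabla u|^2,\ \Delta|\nabla u|^2,\ \dots).$$
Rather than the pointwise chain rule, we would proceed iteratively. Write $u^{\alpha}$ formally as
$$u^{\alpha}=u^{\alpha-6}\int_{\R^{6n}}\prod_{i=1}^6|x-y_i|^{2m-n}\,d\mu_g^{(2m)}(y),$$
using $u^6=\int\prod_{i=1}^6|x-y_i|^{2m-n}\,d\mu$. The exponent bookkeeping works since $\varphi^{-\frac{n+6}{2}}=u^{-\frac{n+6}{n-2m}}$ and $\alpha-6-\frac{n+6}{n-2m}$ matches. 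A cleaner route is to apply $\Delta$ three times to $u^{\alpha}$. At each stage the result is of the form $u^{\alpha-k}$ times a symmetrized integral over $\R^{kn}$ of products $|x-y|^{[2m-n,\dots]}$ times a polynomial in the $Z$'s. Then apply $\Delta$ by the product rule:
$$\Delta(u^{\beta}F)=\beta u^{\beta-1}F\Delta u+\beta(\beta-1)u^{\beta-2}F|\nabla u|^2+2\beta u^{\beta-1}\langle\nabla u,\nabla F\rangle+u^\beta\Delta F.$$
Each term is rewritten via \eqref{general laplacian}–\eqref{general inner product}, multiplying by extra factors $u=\int|x-y_i|^{2m-n}\,d\mu$ so that all terms carry the common weight $u^{\alpha-6}\prod_{i=1}^6|x-y_i|^{2m-n}$. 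The needed identities are $\Delta u=-(2m-2)(n-2m)\int|x-y|^{2m-n}Z_1\,d\mu$ and $|\nabla u|^2=\frac{(n-2m)^2}{2}\int\!\!\int|x-y|^{[2m-n,2m-n]}(Z_1+Z_2-Z_{1,2})\,d\mu$. After three iterations one obtains a cubic polynomial in $Z_i,Z_{i,j}$ integrated against the product measure on $\R^{6n}$.

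\textbf{Step 2.} Symmetrize over permutations of $(y_1,\dots,y_6)$ to reduce to the monomial types listed, such as $Z_{1,2}^3$, $Z_{1,2}^2Z_{1,3}$, $Z_{1,2}Z_{3,4}Z_{5,6}$, and so on. For $n>2m$ the claimed $P_{n,m}$ contains only $Z_{1,2}$-type factors, so all terms involving a bare $Z_i$ must cancel. We expect this cancellation to come from the precise value $\alpha=\frac{n-6}{n-2m}$, which reflects the conformal invariance. We would verify it by collecting the coefficients of $Z_1^3$, $Z_1^2Z_2$, $Z_1Z_{1,2}$, etc., and checking that they vanish; the factor $(2m-n-2)$ should appear as $2m-n$ shifted by the derivative count. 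Finally, pull out the common factor $2(m-3)$ and check that the remaining factors such as $(4m-n-6)$, $(3m-n-3)$, $(8m-3n-6)$ and $(5m-2n-3)$ emerge as products of the accumulated chain-rule coefficients.

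\textbf{Step 3.} For $n=2m$ the same scheme applies to $w=e^{\frac{n-6}{2}u}$, now with derivatives producing powers of $\frac{n-6}{2}$. The log kernel gives $\nabla u=-\int\frac{x-y}{|x-y|^2}\,d\mu^{(n)}$. To homogenize to six integration variables one multiplies by $1=\lambda^{-1}\int d\mu^{(n)}$, which explains the powers of $\lambda$. Here the bare $Z_i$ terms do not cancel, and they acquire the factor $(2-\lambda)$. We would trace this factor to the identity $\Delta u=-(n-2)\int Z_1\,d\mu$ combined with the homogenization, and compare with $\lambda$ times the contribution from $|\nabla u|^2$.

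\textbf{Main obstacle.} The main obstacle is purely the combinatorial bookkeeping in Steps 1–2: tracking roughly twenty monomial types through three Laplacians, and proving the cancellation of all $Z_i$-terms when $n>2m$. We would organize this by first computing a general formula for $\Delta(u^{\beta}\int|x-y|^{[2m-n,\dots]}Q(Z)\,d\mu)$ for $Q$ of degree $\le 2$. As a check, we would confirm that the degree-1 and degree-2 analogues of the same procedure reproduce the known integral formulas for $Q^{(2)}$ and $Q^{(4)}$ from \cite{LX2}.
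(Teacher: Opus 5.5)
Your plan follows the paper's proof. The paper also removes one Laplacian at a time, writing $\Delta^2(u^t)=\Delta(u^{t-2}\cdot u^{2-t}\Delta(u^t))$ and $\Delta^3(u^t)=\Delta(u^{t-4}\cdot u^{4-t}\Delta^2(u^t))$, which is your product rule for $\Delta(u^\beta F)$. It evaluates each piece with \eqref{general laplacian}--\eqref{general inner product}, symmetrizes, and for $n=2m$ homogenizes with \eqref{change of dimension}, which is your $1=\lambda^{-1}\int d\mu_g^{(n)}$. It too checks the vanishing of the bare $Z_i$ terms for $n>2m$ only by collecting coefficients.

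One claim in your Step~1 is wrong, and it affects what you would actually prove: the exponents do not match. Indeed $\alpha-6-\frac{n+6}{n-2m}=-6-\frac{12}{n-2m}\neq0$. So your bookkeeping, carried through correctly, yields the right-hand side of \eqref{formula laplacian cubed} multiplied by $u(x)^{-\frac{6(n-2m+2)}{n-2m}}$, not \eqref{formula laplacian cubed} as displayed.

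This is not a defect of your method. The paper's proof contains the same slip when it writes $Q_g^{(6)}=\frac{2}{n-6}\,u^{6-t}(-\Delta)^3(u^t)$, whereas \eqref{Sec1Eq1} gives $Q_g^{(6)}=\frac{2}{n-6}\,u^{-\frac{n+6}{n-2m}}(-\Delta)^3(u^t)$. A scaling check shows the factor is needed. Replacing $u$ by $cu$ (i.e.\ $g$ by $c^{\frac{4}{n-2m}}g$) multiplies $\mu_g^{(2m)}$ by $c$, hence the right-hand side of \eqref{formula laplacian cubed} by $c^6$. Meanwhile $Q_g^{(6)}$ is multiplied by $c^{-\frac{12}{n-2m}}$.

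Since the missing factor is positive, the sign conclusions drawn from the formula in the proofs of Theorems~\ref{Th1} and~\ref{Th2} are unaffected. The case $n=2m$, where the prefactor $e^{-6u}$ is present, is correctly normalized. For $n>2m$, however, you should prove the identity with the prefactor $u(x)^{-6-\frac{12}{n-2m}}$ rather than assert that the exponents match.
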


We note that for $n=2m$, we have $0<\lambda\le2$ as a consequence of (A) together with Li, Wei, and Xu's Theorem~3.8~\cite{LWX}.

\proof[Proof of Proposition 2.1]
We begin with assuming $ n > 2m $. Let $\varphi$ be a smooth and positive function {on $\R^n$}, $g=\varphi^2\left|dx\right|^2$ be a conformally Euclidean metric on $\R^n$, and $u$ be as in \eqref{Sec1Eq2}. We begin by computing $ \Delta^2 \(u^t\) $, where
$$t := \frac{n-6}{n-2m}.$$ 
We note 
$$ t - 1 = \frac{2m-6}{n-2m} .$$
Simple computations show
$$\Delta\(u^t\) = \divergence\(t u^{t-1} \nabla u\) = t \(t-1\) u^{t-2}\left|\nabla u\right|^2 + t u^{t-1} \Delta u .$$
Plugging \eqref{laplacian} and \eqref{general inner product} into this formula gives
\begin{align}\label{laplacian for u^t}
u\(x\)^{2-t}\Delta\(u^t\)\(x\)&= -\(n-6\) \int_{\mathbb{R}^{2n}}\left|x - y_1\right|^{2m - n} \left|x - y_2\right|^{2m - n}\nonumber\\
&\quad{\times\(4Z_1\(x,y\)+\(m-3\)Z_{1,2}\(x,y\)\)d\mu_g^{\(2m\)}\(y\)}.
\end{align}
We now write
$$ t - 2 = \frac{ 4m -n- 6 }{n-2m}\quad\text{and}\quad t-3 = \frac{6m -2n- 6}{n-2m}.$$
Similarly as in \eqref{laplacian for u^t}, we calculate
\begin{align}\label{laplacian for u^{t-2}}
&u\(x\)^{4-t}\Delta\(u^{t-2}\)\(x\)\nonumber\\
&\quad=\(4m-n-6\)\int_{\mathbb{R}^{2n}}\left|x - y_3\right|^{2m-n}\left|x - y_4\right|^{2m-n}{\(\(4m-2n-4\)Z_3\(x,y\)\right.}\nonumber\\
&\qquad{\left.-\(3m-n-3\)Z_{3,4}\(x,y\)\)d\mu_g^{\(2m\)}\(y\)}.
\end{align}
We then have 
\begin{align}\label{laplacian squared}
 \Delta^2\(u^t\)&=\Delta\(u^{t-2}\cdot u^{2-t} \Delta\(u^t\)\)\nonumber\\
&= u^{2-t}\Delta\(u^{t-2}\)\Delta\(u^t\)+2\<\nabla\(u^{t-2}\) , \nabla\(u^{2-t}\Delta\(u^t\)\)\>\nonumber\\
 &\quad+ u^{t-2} \Delta \(u^{2-t}\Delta\(u^t\)\).
\end{align}
Applying (\ref{general laplacian}), (\ref{general inner product}), \eqref{laplacian for u^t}, \eqref{laplacian for u^{t-2}}, and (\ref{laplacian squared}) gives
\begin{align*}
&u\(x\)^{4-t}\Delta^2\(u^t\)\(x\)\nonumber\\
&\quad=-\(n-6\)\int_{\mathbb{R}^{4n}}\prod_{i=1}^4\left|x - y_i\right|^{2m-n}\big(\(4m-n-6\)\(4Z_1+\(m-3\)Z_{1,2}\)\nonumber\\
&\qquad\times\(\(4m-2n-4\)Z_3-\(3m-n-3\)Z_{3,4}\)-4\(4m-n-6\)Z_1\nonumber\\
&\qquad\times\big(\(2m-n-2\)Z_1+\(2m-n\)Z_2+\(4m-2n-2\)Z_3\nonumber\\
&\qquad-\(2m-n-2\)Z_{1,3}-\(2m-n\)Z_{2,3}\big)-\(m-3\)\(4m-n-6\)\nonumber\\
&\qquad\times\(2m-n-2\)Z_{1,2}\(Z_1+Z_2+2Z_3-Z_{1,3}-Z_{2,3}\)\nonumber\allowdisplaybreaks\\
&\qquad+4Z_1\big(\(4m-n-4\)\(\(2m-n-2\)Z_1+\(2m-n\)Z_2\)\nonumber\\
&\qquad-\(2m-n\)\(2m-n-2\)Z_{1,2}\big)+\(m-3\)\(2m-n-2\)Z_{1,2}\nonumber\\
&\qquad\times\(\(4m-n-6\)\(Z_1+Z_2\)-\(2m-n-2\)Z_{1,2}\)\big){\(x,y\)d\mu_g^{\(2m\)}\(y\)}.
\end{align*} 
By interchanging the order of integration, we then obtain
\begin{align}\label{formula for laplacian squared}
&u\(x\)^{4-t}\Delta^2\(u^t\)\(x\)\nonumber\\
&\quad= \(n-6\)\int_{\mathbb{R}^{4n}}\prod_{i=1}^4\left|x - y_i\right|^{2m-n}\big(c_1Z_1^2+ c_2 Z_1 Z_2+c_3 Z_1 Z_{1,2} + c_4 Z_1 Z_{2,3}\nonumber\\
&\qquad+c_5 Z_{1,2}^2+c_6Z_{1,2} Z_{1,3}+ c_7 Z_{1,2} Z_{3,4}\big){\(x,y\)d\mu_g^{\(2m\)}\(y\)},
\end{align}
where
\begin{align*}
c_1&:=4\(4m-n-6\)\(2m-n-2\)-4\(4m-n-4\)\(2m-n-2\)\\
&=-8\(2m - n - 2\),\allowdisplaybreaks\\
c_2&:=-4\(4m-n-6\)\(4m-2n-4\)+4\(4m-n-6\)\(6m-3n-2\)\\
&\quad-4\(4m-n-4\)\(2m-n\)\\
&=16\(m-3\),\allowdisplaybreaks\\
c_3&:=-4\(4m-n-6\)\(2m-n-2\)+2\(m-3\)\(4m-n-6\)\(2m-n-2\)\\
&\quad+4\(2m-n\)\(2m-n-2\)-2\(m-3\)\(2m-n-2\)\(4m-n-6\)\\
&=- 8\(m-3\)\(2m - n - 2\),\allowdisplaybreaks\\
c_4&:=-\(4m-n-6\)\(m-3\)\(4m-2n-4\)+4\(4m-n-6\)\(3m-n-3\)\\
&\quad-4\(4m-n-6\)\(2m-n\)+2\(m-3\)\(4m-n-6\)\(2m-n-2\)\\
&=4\(m-3\)\(4m-n-6\),\allowdisplaybreaks\\
c_5&:=\(m-3\)\(2m-n-2\)^2,\allowdisplaybreaks\\
c_6&:=-2\(m-3\)\(4m-n-6\)\(2m-n-2\),\allowdisplaybreaks\\
c_7&:=\(m-3\)\(4m-n-6\)\(3m-n-3\).   
\end{align*}
To compute $ \(-\Delta\)^3 \(u^t\) $, we write
\begin{align}\label{laplacian cubed}
 \Delta^3\(u^t\)&=\Delta\(u^{t-4}\cdot u^{4-t} \Delta^2\(u^t\)\)\nonumber\\
&= u^{4-t}\Delta\(u^{t-4}\)\Delta^2\(u^t\)+2\<\nabla\(u^{t-4}\),\nabla\(u^{4-t} \Delta^2\(u^t\)\) \>\nonumber\\
 &\quad+ u^{t-4} \Delta \(u^{4-t} \Delta^2\(u^t\)\).
\end{align}
Similarly to \eqref{laplacian for u^{t-4}}, we now write
$$ t - 4 = \frac{ 8m-3n-6}{n-2m}\quad\text{and}\quad t-5 = \frac{10m-4n-6}{n-2m},$$
and calculate
\begin{align}\label{laplacian for u^{t-4}}
&u\(x\)^{6-t}\Delta\(u^{t-4}\)\(x\)\nonumber\\
&\quad=\(8m-3n-6\)\int_{\mathbb{R}^{2n}} \left|x - y_5\right|^{2m-n}\left|x - y_6\right|^{2m-n}{\(\(8m-4n-4\)Z_5\(x,y\)\right.}\nonumber\\
&\qquad{\left.-\(5m-2n-3\)Z_{5,6}\(x,y\)\)d\mu_g^{\(2m\)}\(y\)}.
\end{align}
Applying (\ref{general laplacian}), (\ref{general inner product}), (\ref{formula for laplacian squared}), and (\ref{laplacian for u^{t-4}}) gives
\begin{align*}
&\frac{1}{n-6}\,u\(x\)^{10-2t}\Delta\(u^{t-4}\)\(x\)\Delta^2\(u^t\)\(x\)\\
&\quad=\(8m-3n-6\)\int_{\mathbb{R}^{6n}} \prod_{i=1}^6 \left|x - y_i\right|^{2m - n}\big(c_1Z_1^2+ c_2 Z_1 Z_2+c_3 Z_1 Z_{1,2}\\
&\qquad+ c_4 Z_1 Z_{2,3}+c_5 Z_{1,2}^2+c_6Z_{1,2} Z_{1,3}+ c_7 Z_{1,2} Z_{3,4}\big)\(\(8m-4n-4\)Z_5\right.\\
&\qquad\left.-\(5m-2n-3\)Z_{5,6}\){\(x,y\)d\mu_g^{\(2m\)}\(y\)},\allowdisplaybreaks\\
&\frac{2}{n-6}\,u\(x\)^{6-t}\<\nabla\(u^{t-4}\)\(x\),\nabla\(u^{4-t} \Delta^2\(u^t\)\)\(x\)\>\\
&\quad=\(3n-8m+6 \)\int_{\mathbb{R}^{6n}} \prod_{i=1}^6 \left|x - y_i\right|^{2m - n}\big(c_1Z_1^2\big(\(2m-n-4\)\(Z_1+Z_5\right.\\
&\qquad\left.-Z_{1,5}\)+\(2m-n\)\(Z_2+Z_3+Z_4+3Z_5-Z_{2,5}-Z_{3,5}-Z_{4,5}\)\big)\allowdisplaybreaks\\
&\qquad+ c_2 Z_1 Z_2\big(\(2m-n-2\)\(Z_1+Z_2+2Z_5-Z_{1,5}-Z_{2,5}\)+\(2m-n\)\\
&\qquad\times\(Z_3+Z_4+2Z_5-Z_{3,5}-Z_{4,5}\)\big)+c_3 Z_1 Z_{1,2}\big(\(2m-n-4\)\(Z_1+Z_5\right.\allowdisplaybreaks\\
&\qquad\left.-Z_{1,5}\)+\(2m-n-2\)\(Z_2+Z_5-Z_{2,5}\)+\(2m-n\)\(Z_3+Z_4+2Z_5\right.\allowdisplaybreaks\\
&\qquad\left.-Z_{3,5}-Z_{4,5}\)\big) + c_4 Z_1 Z_{2,3}\big(\(2m-n-2\)\(Z_1+Z_2+Z_3+3Z_5-Z_{1,5}\right.\allowdisplaybreaks\\
&\qquad\left.-Z_{2,5}-Z_{3,5}\)+\(2m-n\)\(Z_4+Z_5-Z_{4,5}\)\big)+c_5 Z_{1,2}^2\big(\(2m-n-4\)\\
&\qquad\times\(Z_1+Z_2+2Z_5-Z_{1,5}-Z_{2,5}\)+\(2m-n\)\(Z_3+Z_4+2Z_5-Z_{3,5}\right.\allowdisplaybreaks\\
&\qquad\left.-Z_{4,5}\)\big)+c_6Z_{1,2} Z_{1,3}\big(\(2m-n-4\)\(Z_1+Z_5-Z_{1,5}\)+\(2m-n-2\)\\
&\qquad\times\(Z_2+Z_3+2Z_5-Z_{2,5}-Z_{3,5}\)+\(2m-n\)\(Z_4+Z_5-Z_{4,5}\)\big)\allowdisplaybreaks\\
&\qquad+ \(2m-n-2\)c_7  Z_{1,2} Z_{3,4}\(Z_1+Z_2+Z_3+Z_4+4Z_5-Z_{1,5}-Z_{2,5}\right.\\
&\qquad\left.-Z_{3,5}-Z_{4,5}\)\big){\(x,y\)d\mu_g^{\(2m\)}\(y\)},
\end{align*}
and
\begin{align*}
&\frac{1}{n-6}\,u\(x\)^2 \Delta \(u^{4-t} \Delta^2\(u^t\)\)\(x\)\\
&\quad=\int_{\mathbb{R}^{6n}} \prod_{i=1}^6 \left|x - y_i\right|^{2m - n}\big(c_1Z_1^2\big(\(8m-3n-6\)\(\(2m-n-4\)Z_1\right.\allowdisplaybreaks\\
&\qquad\left.+\(2m-n\)\(Z_2+Z_3+Z_4\)\)-\(2m-n\)\(2m-n-4\)\(Z_{1,2}+Z_{1,3}\right.\allowdisplaybreaks\\
&\qquad\left.+Z_{1,4}\)-\(2m-n\)^2\(Z_{2,3}+Z_{2,4}+Z_{3,4}\)\big)+ c_2 Z_1 Z_2\big(\(8m-3n-6\)\\
&\qquad\times\(\(2m-n-2\)\(Z_1+Z_2\)+\(2m-n\)\(Z_3+Z_4\)\)-\(2m-n-2\)^2Z_{1,2}\allowdisplaybreaks\\
&\qquad-\(2m-n\)\(2m-n-2\)\(Z_{1,3}+Z_{1,4}+Z_{2,3}+Z_{2,4}\)-\(2m-n\)^2Z_{3,4}\big)\allowdisplaybreaks\\
&\qquad+c_3 Z_1 Z_{1,2}\big(\(8m-3n-8\)\(\(2m-n-4\)Z_1+\(2m-n-2\)Z_2\right.\allowdisplaybreaks\\
&\qquad\left.+\(2m-n\)\(Z_3+Z_4\)\)-\(2m-n-2\)\(2m-n-4\)Z_{1,2}-\(2m-n\)\\
&\qquad\times\(2m-n-4\)\(Z_{1,3}+Z_{1,4}\)-\(2m-n\)\(2m-n-2\)\(Z_{2,3}+Z_{2,4}\)\allowdisplaybreaks\\
&\qquad-\(2m-n\)^2Z_{3,4}\big)+ c_4 Z_1 Z_{2,3}\big(\(8m-3n-8\)\(\(2m-n-2\)\(Z_1+Z_2\right.\right.\allowdisplaybreaks\\
&\qquad\left.\left.+Z_3\)+\(2m-n\)Z_4\)-\(2m-n-2\)^2\(Z_{1,2}+Z_{1,3}+Z_{2,3}\)-\(2m-n\)\\
&\qquad\times\(2m-n-2\)\(Z_{1,4}+Z_{2,4}+Z_{3,4}\)\big)+c_5 Z_{1,2}^2\big(\(8m-3n-10\)\\
&\qquad\times\(\(2m-n-4\)\(Z_1+Z_2\)+\(2m-n\)\(Z_3+Z_4\)\)-\(2m-n-4\)^2Z_{1,2}\allowdisplaybreaks\\
&\qquad-\(2m-n\)\(2m-n-4\)\(Z_{1,3}+Z_{1,4}+Z_{2,3}+Z_{2,4}\)-\(2m-n\)^2Z_{3,4}\big)\allowdisplaybreaks\\
&\qquad+c_6Z_{1,2} Z_{1,3}\big(\(8m-3n-10\)\(\(2m-n-4\)Z_1+\(2m-n-2\)\right.\\
&\qquad\left.\times\(Z_2+Z_3\)+\(2m-n\)Z_4\)-\(2m-n-2\)\(2m-n-4\)\(Z_{1,2}+Z_{1,3}\)\\
&\qquad-\(2m-n\)\(2m-n-4\)Z_{1,4}-\(2m-n-2\)^2Z_{2,3}-\(2m-n\)\\
&\qquad\times\(2m-n-2\)\(Z_{2,4}+Z_{3,4}\)\big)+ \(2m-n-2\)c_7 Z_{1,2} Z_{3,4}\\
&\qquad\times\big(\(8m-3n-10\)\(Z_1+Z_2+Z_3+Z_4\)-\(2m-n-2\)\(Z_{1,2}+Z_{1,3}\right.\allowdisplaybreaks\\
&\qquad\left.+Z_{1,4}+Z_{2,3}+Z_{2,4}+Z_{3,4}\)\big)\big){\(x,y\)d\mu_g^{\(2m\)}\(y\)}.
\end{align*}
Interchanging the order of integration and plugging these formulas into \eqref{laplacian cubed} then gives
\begin{align*}
Q_g^{\(6\)}\(x\)&=\frac{2}{n-6}\,u\(x\)^{6-t} \(-\Delta\)^3 \(u^t\)\(x\)\allowdisplaybreaks\\
&= 2\int_{\mathbb{R}^{6n}} \prod_{i=1}^6 \left|x - y_i\right|^{2m - n}Z_{1,2}{\(x,y\)}\big(c^\prime_1 Z_{1,2}^2+c^\prime_2Z_{1,2} Z_{1,3}\allowdisplaybreaks\\ 
&\quad+ c^\prime_3 Z_{1,2}Z_{3,4}+c^\prime_4Z_{1,3}Z_{1,4}+c^\prime_5Z_{1,3}Z_{2,3}+c^\prime_6Z_{1,3} Z_{3,4}\\
&\quad+c^\prime_7Z_{1,3} Z_{5,6}+c^\prime_8Z_{3,4} Z_{5,6}\big){\(x,y\)d\mu_g^{\(2m\)}\(y\)},
\end{align*}
where
\begin{align*}
c^\prime_1&:=\(2m-n-4\)^2c_5\\
&=\(m-3\)\(2m-n-2\)^2\(2m-n-4\)^2,\allowdisplaybreaks\\
c^\prime_2&:=2\(3n-8m+6\)\(2m-n-4\)c_5+4\(2m-n\)\(2m-n-4\)c_5\\
&\quad+2\(2m-n-2\)\(2m-n-4\)c_6\\
&=-6\(m-3\)\(4m - n - 6\)\(2m-n-2\)^2\(2m-n-4\),\allowdisplaybreaks\\
c^\prime_3&:=\(8m-3n-6\)\(5m-2n-3\)c_5+2\(3n-8m+6\)\(2m-n\)c_5\\
&\quad+\(2m-n\)^2c_5+2\(2m-n-2\)^2c_7\\
&=3\(m-3\)\(4m - n - 6\)\(3m - n - 3\)\(2m-n-2\)^2, \allowdisplaybreaks\\
c^\prime_4&:=\(3n-8m+6\)\(2m-n-4\)c_6+\(2m-n\)\(2m-n-4\)c_6\\
&=4\(m-3\)\(4m - n  - 6\)\(3m - n - 3\)\(2m - n - 2\)\(2m - n - 4\),\allowdisplaybreaks\\
c^\prime_5&:=\(2m-n-2\)^2c_6\\
&=-2\(m-3\)\(4m - n - 6\)\(2m - n - 2\)^3 , \allowdisplaybreaks\\
c^\prime_6&:=2\(3n-8m+6\)\(2m-n-2\)c_6+2\(2m-n\)\(2m-n-2\)c_6\\
&\quad+4\(2m-n-2\)^2c_7\\
&=12\(m-3\)\(4m - n - 6\)\(3m - n - 3\)\(2m-n-2\)^2, \allowdisplaybreaks\\
c^\prime_7&:=\(8m-3n-6\)\(5m-2n-3\)c_6+\(3n-8m+6\)\(2m-n\)c_6\\
&\quad+4\(3n-8m+6\)\(2m-n-2\)c_7\\
&=-6\(m-3\)\(4m - n - 6\)\(3m - n - 3\)\(8m-3n-6\)\(2m - n - 2\),\allowdisplaybreaks\\
c^\prime_8&:=\(8m-3n-6\)\(5m-2n-3\)c_7\\
&=\(m-3\)\(4m - n - 6\)\(3m - n - 3\)\(8m-3n-6\)\(5m-2n-3\).
\end{align*}
Note that no term containing $Z_1,\dotsc,Z_6$ appears in this formula. Indeed, computations similar to those above show that the coefficients of all these terms are zero.

\smallskip
Now we assume $ n = 2m $. In this case, in addition to (\ref{general laplacian}), (\ref{laplacian}), and (\ref{general inner product}), we note the formula
\begin{align}\label{change of dimension}
\int_{\R^{\(k+l\)n}} {f\(y\)d\mu_g^{\(n\)}\(y\)}= \lambda^k  \int_{\R^{ln}} {f\(y\)d\mu_g^{\(n\)}\(y\)}
\end{align}
for all $k,l\in\N$, $y=\(y_1,\dotsc,y_l\)\in\R^{ln}$ and $f\in {L^1\big(\R^{ln},\mu_g^{\(n\)}\big)}$. We compute
\begin{align}\label{laplacian for exponential}
&-{e^{-\frac{n-6}{2} u\(x\)}}\Delta \( e^{\frac{n-6}{2} u}\)\(x\)\nonumber\\
&\quad=  -\frac{n-6}{2} \( \Delta u\(x\) +  \frac{n-6}{2}  \left|\nabla u\(x\)\right|^2 \)\allowdisplaybreaks\nonumber\\
&\quad=\frac{n-6}{2}\(\(n-2\)\int_{\mathbb{R}^n}{Z_1\,d\mu_g^{\(n\)}}-\frac{n-6}{4}\int_{\mathbb{R}^{2n}}{\(Z_1+Z_2-Z_{1,2}\)d\mu_g^{\(n\)}}\)\nonumber\\
&\quad=a_1\int_{\mathbb{R}^n}{Z_1\,d\mu_g^{\(2m\)}}+  a_2 \int_{\mathbb{R}^{2n}}{Z_{1,2}\,d\mu_g^{\(n\)}},
\end{align}
where 
$$a_1:=\frac{n-6}{2}\(n-2 - \frac{n-6}{2}\lambda \)\quad\text{and}\quad a_2:=\frac{\(n-6\)^2}{8}.$$
We then compute
\begin{align}\label{laplacian squared for exponential}
&e^{-\frac{n-6}{2} u\(x\)}\Delta^2\(e^{\frac{n-6}{2} u}\)\(x\)\nonumber\\
&\quad=e^{-\(n-6\) u\(x\)}\(\Delta\(e^{\frac{n-6}{2} u}\)\(x\)\)^2+ \Delta\(e^{-\frac{n-6}{2} u}\Delta\(e^{\frac{n-6}{2} u}\)\(x\)\)\nonumber\\
&\qquad  + \(n-6\)\< \nabla u\(x\), \nabla\(e^{-\frac{n-6}{2} u}\Delta\(e^{\frac{n-6}{2} u}\)\)\(x\)\>\allowdisplaybreaks\nonumber\\
&\quad=a_1^2\int_{\R^{2n}}Z_1Z_2{\,d\mu_g^{\(n\)}}+2a_1\int_{\R^{3n}}Z_1Z_{2,3}{\,d\mu_g^{\(n\)}}+a_2^2\int_{\R^{4n}}Z_{1,2}Z_{3,4}{\,d\mu_g^{\(n\)}}\nonumber\\
&\qquad+2\(n-4\)a_1\int_{\R^n}Z_1^2{\,d\mu_g^{\(n\)}}+a_2\int_{\R^{2n}}Z_{1,2}\(2\(n-6\)\(Z_1+Z_2\)\right.\nonumber\allowdisplaybreaks\\
&\qquad\left.+4Z_{1,2}\){d\mu_g^{\(n\)}}-\(n-6\)\(a_1\int_{\R^{2n}}Z_1\(Z_1+Z_2-Z_{1,2}\){d\mu_g^{\(n\)}}\right.\nonumber\\
&\qquad\left.+a_2\int_{\R^{3n}}Z_{1,2}\(Z_1+Z_2+2Z_3-Z_{1,3}-Z_{2,3}\){d\mu_g^{\(n\)}}\)\allowdisplaybreaks\nonumber\\
&\quad= b_1 \int_{\R^n} Z_1^2{\,d\mu_g^{\(n\)}} + b_2 \int_{\R^{2n}} Z_1 Z_2{\,d\mu_g^{\(n\)}} + b_3 \int_{\R^{2n}} Z_{1}Z_{1,2}{\,d\mu_g^{\(n\)}}\nonumber\allowdisplaybreaks\\ 
&\qquad + b_4 \int_{\R^{2n}} Z_{1,2}^2{\,d\mu_g^{\(n\)}}+ b_5 \int_{\R^{3n}} Z_1 Z_{2,3}{\,d\mu_g^{\(n\)}}+ b_6 \int_{\R^{3n}} Z_{1,2} Z_{1,3}{\,d\mu_g^{\(n\)}}\nonumber\\
&\qquad +  b_7 \int_{\R^{4n}} Z_{1,2} Z_{3,4}{\,d\mu_g^{\(n\)}},
\end{align}
where
\begin{align*}
b_1 &:=\(2\(n-4\) - \(n-6\)\lambda\)a_1 \\
& = \(n-6\)\(n - 2 - \frac{n-6}{2}\lambda\)\(n - 4 - \frac{n-6}{2}\lambda\), \allowdisplaybreaks \\
b_2 &:=a_1^2-\(n-6\)a_1\\
&= \frac{\(n-6\)^2}{4}\(n - 2 - \frac{ n-6}{2}\lambda\)\(n - 4 - \frac{n-6}{2}\lambda\), \allowdisplaybreaks\\
b_3 &:=  \(n-6\)a_1+\(n-6\)\(4-2\lambda\)a_2 \\
&= \(n-6\)^2\(n - 4 -  \frac{n-6}{2}\lambda\),\allowdisplaybreaks \\
b_4 &:= 4a_2\\
&=\frac{\(n-6\)^2}{2},\allowdisplaybreaks \\
b_5 &:=  2a_1a_2-2\(n-6\)a_2\\
&= \frac{\(n-6\)^3}{8}\(n - 4 -  \frac{n-6}{2}\lambda\), \allowdisplaybreaks \\
b_6 &:=2\(n-6\)a_2\\
&= \frac{\(n-6\)^3}{4},\allowdisplaybreaks \\
b_7 &:=a_2^2\\
&= \frac{\(n-6\)^4}{64}.
\end{align*}
Applying (\ref{general laplacian}), (\ref{general inner product}), and \eqref{laplacian squared for exponential} then gives
\begin{align*}
&2  \< \nabla u\(x\), \nabla\(e^{-\frac{n-6}{2} u} \Delta^2 \(e^{\frac{n-6}{2} u}\)\)\(x\)\>\\
&\quad=  4 b_1\int_{\R^{2n}}  Z_1^2 \(Z_1 + Z_2 - Z_{1,2}\){d\mu_g^{\(n\)}}+   \int_{\R^{3n}} \big( 2 b_2 Z_1 Z_2 \( Z_1 +Z_2 +  2Z_3\right.\allowdisplaybreaks\\
&\qquad\left. - Z_{1,3}  - Z_{2,3}\)+ b_3 Z_1 Z_{1,2} \( 4\(Z_1 + Z_3 - Z_{1,3}\) + 2\(Z_2 + Z_3 - Z_{2,3}\)\)\allowdisplaybreaks\\
 &\qquad+ 4b_4 Z_{1,2}^2\(Z_1 + Z_2 + 2Z_3 - Z_{1,3} - Z_{2,3}\)\big){\,d\mu_g^{\(n\)}}+ \int_{\R^{4n}} \big( 2 b_5Z_1 Z_{2,3}\\
&\qquad\times\(Z_1 + Z_2 + Z_3 + 3Z_4 - Z_{1,4} - Z_{2,4} - Z_{3,4}\)+ b_6 Z_{1,2} Z_{1,3} \( 4\(Z_1 + Z_4\right.\right.\allowdisplaybreaks\\
&\qquad\left.\left.- Z_{1,4}\) + 2\(Z_2 + Z_3 + 2Z_4 - Z_{2,4} - Z_{3,4}\) \) \big){\,d\mu_g^{\(n\)}}+ 2 b_7 \int_{\R^{5n}} Z_{1,2}Z_{3,4}\\
&\qquad\times\(Z_1 + Z_2 + Z_3 + Z_4 + 4Z_5 - Z_{1,5} - Z_{2,5} - Z_{3,5} - Z_{4,5}\){d\mu_g^{\(n\)}}
\end{align*}
and
\begin{align*}
&\Delta\(e^{-\frac{n-6}{2} u} \Delta^2 \(e^{\frac{n-6}{2} u}\)\)\(x\)\\
 &\quad= -4\(n-6\) b_1 \int_{\R^n} Z_1^3{\,d\mu_g^{\(n\)}}- \int_{\R^{2n}} \big( b_2 Z_1 Z_2 \( 2\(n-6\)\(Z_1 + Z_2\) + 4Z_{1,2} \)   \allowdisplaybreaks\\
&\qquad + b_3 Z_1 Z_{1,2} \( \(n-8\)\(4Z_1 + 2Z_2\) + 8Z_{1,2} \)+ b_4 Z_{1,2}^2 \( 4\(n-10\)\(Z_1 + Z_2\)\right.\allowdisplaybreaks\\
&\qquad\left.+ 16Z_{1,2}\)\big){\,d\mu_g^{\(n\)}}- \int_{\R^{3n}} \big( b_5 Z_1 Z_{2,3} \( 2\(n-8\)\(Z_1 + Z_2 + Z_3\)\right.   \allowdisplaybreaks\\
&\qquad\left.+ 4\(Z_{1,2} + Z_{1,3} + Z_{2,3}\) \)+ b_6 Z_{1,2} Z_{1,3} \( \(n-10\)\(4Z_1 + 2Z_2 + 2Z_3\)\right.   \allowdisplaybreaks\\
&\qquad\left. + 8 Z_{1,2} + 8Z_{1,3} + 4Z_{2,3} \) \big){\,d\mu_g^{\(n\)}}- b_7\int_{\R^{4n}} \big( Z_{1,2} Z_{3,4} \( 2\(n-10\)\(Z_1 + Z_2\right.\right.\\
&\qquad\left.\left. + Z_3 + Z_4\)+4\(Z_{1,2} + Z_{1,3} + Z_{1,4} + Z_{2,3} + Z_{2,4} + Z_{3,4}\) \)\big){\,d\mu_g^{\(n\)}}.
\end{align*}
Finally, applying these formulas together with \eqref{change of dimension} and interchanging the order of integration give
\begin{align*}
&\(-\Delta\)^3 \(e^{\frac{n-6}{2}u}\)\(x\)\\
&\quad= -e^{-\frac{n-6}{2} u\(x\)}\Delta\(e^{\frac{n-6}{2} u}\)\(x\)\Delta^2 \(e^{\frac{n-6}{2} u}\)\(x\)\\
&\qquad - \(n-6\)e^{\frac{n-6}{2}u\(x\)}\< \nabla u\(x\), \nabla\(e^{-\frac{n-6}{2} u} \Delta^2 \(e^{\frac{n-6}{2} u}\)\)\(x\) \>\\
&\qquad-e^{\frac{n-6}{2}u\(x\)}\Delta\(e^{-\frac{n-6}{2} u} \Delta^2 \(e^{\frac{n-6}{2} u}\)\)\(x\)  \\
&\quad= e^{\frac{n-6}{2}u\(x\)} \int_{\R^{6n}}\big(b'_1 Z_1^3 + b'_2 Z_1^2 Z_{1,2} + b'_3 Z_1^2 Z_2 + b'_4 Z_1^2 Z_{2,3}+ b'_5  Z_1 Z_{1,2}^2\\
&\qquad + b'_6 Z_1 Z_{1,2} Z_{1,3} + b'_7 Z_1 Z_2 Z_{1,2} + b'_8 Z_1 Z_{1,2} Z_{2,3}+ b'_9 Z_1 Z_{1,2} Z_{3,4}  \\
&\qquad + b'_{10} Z_1 Z_2 Z_{1,3} + b'_{11} Z_1 Z_2 Z_3 + b'_{12} Z_1 Z_2 Z_{3,4}+b'_{13}  Z_1 Z_{2,3}^2  \\
&\qquad + b'_{14} Z_1 Z_{2,3} Z_{2,4} + b'_{15} Z_1 Z_{2,3} Z_{4,5}  + b'_{16} Z_{1,2}^3+ b'_{17} Z_{1,2}^2 Z_{1,3}  \\
&\qquad + b'_{18} Z_{1,2}^2 Z_{3,4} + b'_{19} Z_{1,2} Z_{1,3} Z_{1,4} + b'_{20} Z_{1,2} Z_{1,3} Z_{2,3}  + b'_{21} Z_{1,2} Z_{1,3} Z_{3,4}\\
&\qquad+ b'_{22} Z_{1,2} Z_{1,3} Z_{5,6} + b'_{23} Z_{1,2} Z_{3,4} Z_{5,6}\big){\,d\mu_g^{\(n\)}}, 
\end{align*}
where
\begin{align*}
b'_1 &:= 2\(n-6\)\(2-\lambda\)\lambda^{-5} b_1, \allowdisplaybreaks\\
b'_2 &:= \lambda^{-4}\(2\(n-6\) b_1 + \(4\(n-8\) - 2\(n-6\)\lambda\)b_3\),\allowdisplaybreaks\\
b'_3 &:= \lambda^{-4}\(\(a_1 - 2\(n-6\)\) b_1 + \(n-6\)\(4 - 2\lambda\)b_2\),\allowdisplaybreaks\\
b'_4 &:= \lambda^{-3}\(a_2 b_1 + \(2\(n-8\) -  \(n-6\)\lambda\) b_5\),\allowdisplaybreaks\\
b'_5 &:=\lambda^{-4}\( 8 b_3 + \(8\(n-10\) - 4\(n-6\)\lambda\) b_4\),\allowdisplaybreaks\\
b'_6 &:= \lambda^{-3}\(2\(n-6\) b_3 +  \(4\(n-10\) - 2\(n-6\)\lambda\) b_6\),\allowdisplaybreaks\\
b'_7 &:= \lambda^{-4}\(4b_2 + \(2\(n-8\) - \(n-6\)\lambda\) b_3\),\allowdisplaybreaks\\
b'_8 &:= \lambda^{-3}\(\(n-6\) b_3 + 8 b_5 + \(4\(n-10\) - 2\(n-6\)\lambda\) b_6\),  \allowdisplaybreaks\\
b'_9 &:= \lambda^{-2}\(a_2b_3 + \(n-6\)b_5 + \(8\(n-10\)-4\(n-6\)\lambda\)b_7\),  \allowdisplaybreaks\\
b'_{10} &:= \lambda^{-3}\(2\(n-6\)b_2 + \(a_1-3\(n-6\)\)b_3 + \(4\(n-8\)-2\(n-6\)\lambda\)b_5\), \allowdisplaybreaks   \\
b'_{11} &:= \(a_1-2\(n-6\)\)\lambda^{-3}b_2,\allowdisplaybreaks\\
b'_{12} &:= \lambda^{-2}\(a_2b_2 + \(a_1-3\(n-6\)\)b_5\), \allowdisplaybreaks  \\
b'_{13} &:=\lambda^{-3}\( \(a_1-4\(n-6\)\)b_4 + 4b_5\), \allowdisplaybreaks  \\
b'_{14} &:=\lambda^{-2}\( 2\(n-6\)b_5 + \(a_1-4\(n-6\)\)b_6\),  \allowdisplaybreaks  \\
b'_{15} &:= \lambda^{-1}\(a_2b_5 + \(a_1-4\(n-6\)\)b_7\), \allowdisplaybreaks  \\
b'_{16} &:= 16\lambda^{-4}b_4, \allowdisplaybreaks  \\
b'_{17} &:= \lambda^{-3}\(4\(n-6\)b_4 + 16b_6\), \allowdisplaybreaks  \\
b'_{18} &:= \lambda^{-2}\(a_2b_4 + 8b_7\),  \allowdisplaybreaks  \\
b'_{19} &:= 2\(n-6\)\lambda^{-2}b_6, \allowdisplaybreaks  \\
b'_{20} &:= 4\lambda^{-3}b_6, \allowdisplaybreaks  \\
b'_{21} &:= \lambda^{-2}\(2\(n-6\)b_6 + 16b_7\), \allowdisplaybreaks  \\
b'_{22} &:= \lambda^{-1}\(a_2b_6 + 4\(n-6\)b_7\), \allowdisplaybreaks  \\
b'_{23} &:= a_2b_7.
\end{align*}
Plugging the expressions of $b_1,\dotsc,b_7$ into those of $b'_1,\dotsc,b'_{23}$ and factorizing then give \eqref{formula laplacian cubed n equals 2m}. This ends the proof of Proposition~\ref{Pr}.
\endproof

\section{Proof of Theorem~\ref{Th1}}\label{Sec3}

In this section, we prove Theorem~\ref{Th1} by using the integral formula for $Q_g^{\(6\)}$ obtained in the previous section:

\proof[Proof of Theorem~\ref{Th1}]
Let $m\ge4$, $n\in\left\{2m,2m + 1,\dotsc,4m-6\right\}$, $g$ be a conformally Euclidean metric on $\R^n$ satisfying (A), and $P_{n,m}$ be as in Proposition~\ref{Pr}. In the case where $n=2m$,
we observe that the last eight coefficients of $P_{2m,m}$ are positive and the first fifteen coefficients are nonnegative since $0<\lambda\le2$. Since $Q_g^{\(2m\)}\not\equiv0$ in $\R^n$ as assumed in (A), we then obtain $Q_g^{\(6\)}>0$ as a direct consequence of Proposition~\ref{Pr}. We now consider the case $n>2m$. In this case, we have $Q_g^{\(2m\)}>0$ in $\R^n\setminus B\(0,R\)$ as a consequence of (A). Analyzing the sign changes of the terms of $ P_{n, m}$ reveals three nontrivial cases to consider: 
$ \frac{5m - 3}{2} < n < \frac{8m-6}{3} $, $ \frac{8m-6}{3} < n < 3m - 3 $, and $ 3m - 3 < n < 4m - 6 $. We label these cases a), b), and c) respectively. Let 
$$ Y_{i,j}\(x,y\) := \frac{ x - y_i}{\left|x - y_i\right|^2} - \frac{ x - y_j}{\left|x - y_j\right|^2}$$
for all $i,j\in\left\{1,\dotsc,6\right\}$ and $x,y_1,\dotsc,y_6\in\R^n$. It is easy to see $ \left|Y_{i,j}\right|^2 = Z_{i,j} $. 

\smallskip
For case a), by expanding and interchanging the order of integration, we easily see
\begin{align}\label{identity 3456}
0&\le\int_{\mathbb{R}^{6n}} \prod_{i=1}^6 \left|x - y_i\right|^{2m - n} \left|Y_{1,2}\right|^2 \(\left|Y_{3,4}\right|^2 - \left|Y_{5,6}\right|^2\)^2{d\mu_g^{\(2m\)}\(y\)} \nonumber\allowdisplaybreaks\\
 &= 2 \int_{\mathbb{R}^{6n}} \prod_{i=1}^6 \left|x - y_i\right|^{2m - n} \left|Y_{1,2}\right|^2  \left|Y_{3,4}\right|^2\(\left|Y_{1,2}\right|^2 - \left|Y_{5,6}\right|^2\){d\mu_g^{\(2m\)}\(y\)}.
\end{align}
Therefore, analyzing the coefficients of $P_{n,m}$, to obtain $Q_g^{(6)}>0$ in $\R^n$, it suffices to show
$$3\(n-2m + 2\)^2\ge \(2n-5m+3\)\(8m - 3n - 6\) .$$
This is easy to see by noting the discriminant of
\begin{align*}
&3\(2m-n-2\)^2-\(2n-5m+3\)\(8m-3n-6\)\\
&\quad=9n^2 + \(33 - 43m\)n + 52m^2 - 78m + 30 
\end{align*}
as a quadratic in $ n $ is negative for $ m \geq 1 $.

\smallskip
For case b), we use the inequality
\begin{align*}
0&\le\int_{\mathbb{R}^{6n}} \prod_{i=1}^6 \left|x - y_i\right|^{2m - n} \left|Y_{1,2}\right|^2 \(\left|Y_{1,3}\right|^2 - \left|Y_{5,6}\right|^2\)^2{d\mu_g^{\(2m\)}\(y\)}   \allowdisplaybreaks\\
&=   \int_{\mathbb{R}^{6n}} \prod_{i=1}^6 \left|x - y_i\right|^{2m - n} \left|Y_{1,2}\right|^2 \Big( \left|Y_{1,2}\right|^2\left|Y_{1,3}\right|^2 + \left|Y_{1,2}\right|^2\left|Y_{3,4}\right|^2\\
&\qquad- 2\left|Y_{1,3}\right|^2\left|Y_{5,6}\right|^2\Big){d\mu_g^{\(2m\)}\(y\)} 
 \end{align*}
obtained by Fubini's theorem similarly to (\ref{identity 3456}). To obtain $Q_g^{(6)}>0$ in $\R^n$, it therefore suffices to show the inequalities
$$ 2\(n - 2m + 4\)\(n-2m + 2\) \geq \(3n-8m+6\)\(3m - n - 3\) $$
and
$$ n-2m+2 \geq 3n-8m+6 .$$
The first is obtained by arguing as in case a) and the second is immediate from $ 3m - 3 > n $.

\smallskip
For case c), we have four negative terms to consider. We therefore rearrange the terms of $P_{n,m}$ as
\begin{align*}
&P_{n,m}\(Z\)\\
&\quad=Z_{1,2}\big(\(n - 2m + 2\)^2\(n - 2m + 4\)^2 Z_{1,2}^2- \(4m - n - 6\)\(n  - 3m + 3\)\\
&\qquad\times\big(9\(n-2m+2\)^2+ \(3n-8m+6\)\(2n-5m+3\) - 3\(n-2m + 2\)\\
&\qquad\times\(3n-8m+6\)\big)Z_{1,2}Z_{3,4}+ \(4m - n - 6\)\(n-2m+2\)\(6\(m+1\)\right.\\
&\qquad\quad\times\left.\(n-2m+2\)Z_{1,2} Z_{1,3}-4\(n  - 3m + 3\)\(n - 2m + 4\)Z_{1,3}Z_{1,4}\)\allowdisplaybreaks\\
&\qquad+2\(4m - n - 6\)\(n - 2m + 2\)^3Z_{1,3}Z_{2,3}+ 6\(4m - n - 6\)\(n  - 3m + 3\)\\
&\qquad\times\(n-2m+2\)^2\(Z_{1,2}Z_{3,4}+Z_{1,2} Z_{1,3}  -2 Z_{1,3} Z_{3,4}\)+ 3\(4m - n - 6\) \\
&\qquad\times\(3m - n - 3\)\(8m-3n-6\)\(n - 2m + 2\)\(2Z_{1,3} Z_{5,6} - Z_{1,2}Z_{3,4}\)\allowdisplaybreaks\\
&\qquad+ \(4m - n - 6\)\(n-3m+3\)\(3n-8m+6\)\(2n-5m+3\)\\
&\qquad\times\(Z_{1,2}Z_{3,4} -Z_{3,4} Z_{5,6}\)\big).
\end{align*}
We apply the inequalities
\begin{align*}
0&\le\int_{\mathbb{R}^{6n}} \prod_{i=1}^6 \left|x - y_i\right|^{2m - n} \left|Y_{1,2}\right|^2 \(\left|Y_{1,3}\right|^2 - \left|Y_{3,4}\right|^2\)^2{d\mu_g^{\(2m\)}\(y\)}\allowdisplaybreaks\\
 &=  \int_{\mathbb{R}^{6n}} \prod_{i=1}^6 \left|x - y_i\right|^{2m - n} \left|Y_{1,2}\right|^2 \Big( \left|Y_{1,2}\right|^2\left|Y_{1,3}\right|^2 + \left|Y_{1,2}\right|^2\left|Y_{3,4}\right|^2\\
 &\quad- 2\left|Y_{1,3}\right|^2\left|Y_{3,4}\right|^2\Big){d\mu_g^{\(2m\)}\(y\)},\allowdisplaybreaks\\
0&\le\int_{\mathbb{R}^{6n}} \prod_{i=1}^6 \left|x - y_i\right|^{2m - n} \left|Y_{1,2}\right|^2 \(\left|Y_{1,3}\right|^2 - \left|Y_{1,4}\right|^2\)^2{d\mu_g^{\(2m\)}\(y\)} \allowdisplaybreaks\\
&= 2 \int_{\mathbb{R}^{6n}} \prod_{i=1}^6 \left|x - y_i\right|^{2m - n} \left|Y_{1,2}\right|^2  \left|Y_{1,3}\right|^2\(\left|Y_{1,2}\right|^2 - \left|Y_{1,4}\right|^2\){d\mu_g^{\(2m\)}\(y\)},\allowdisplaybreaks\\
0&\le \int_{\mathbb{R}^{6n}} \prod_{i=1}^6 \left|x - y_i\right|^{2m - n} \left|Y_{1,2}\right|^2 \(\left|Y_{1,2}\right|^2 - \left|Y_{3,4}\right|^2\)^2{d\mu_g^{\(2m\)}\(y\)} \allowdisplaybreaks\\
&=  \int_{\mathbb{R}^{6n}} \prod_{i=1}^6 \left|x - y_i\right|^{2m - n} \left|Y_{1,2}\right|^4 \(\left|Y_{1,2}\right|^2 - \left|Y_{3,4}\right|^2\){d\mu_g^{\(2m\)}\(y\)},
\end{align*}
and
\begin{align*}
0&\le\int_{\mathbb{R}^{6n}} \prod_{i=1}^6 \left|x - y_i\right|^{2m - n} \left|Y_{1,2}\right|^2 \(\left|Y_{3,4}\right|^2\left|Y_{5,6}\right|^2 - \< Y_{3,4}, Y_{5,6} \>^2 \){d\mu_g^{\(2m\)}\(y\)}\allowdisplaybreaks\\
&=\int_{\mathbb{R}^{6n}} \prod_{i=1}^6 \left|x - y_i\right|^{2m - n} \left|Y_{1,2}\right|^2\bigg(\left|Y_{3,4}\right|^2\left|Y_{5,6}\right|^2\\
&\quad-\frac{1}{4}\(\left|Y_{3,5}\right|^2+\left|Y_{4,6}\right|^2-\left|Y_{3,6}\right|^2-\left|Y_{4,5}\right|^2\)^2\bigg){d\mu_g^{\(2m\)}\(y\)}\allowdisplaybreaks\\
&=  \int_{\mathbb{R}^{6n}} \prod_{i=1}^6 \left|x - y_i\right|^{2m - n}{ \left|Y_{1,2}\right|^2\(2 \left|Y_{1,3}\right|^2  \left|Y_{5,6}\right|^2 - \left|Y_{1,2}\right|^2 \left|Y_{3,4}\right|^2\)}{d\mu_g^{\(2m\)}\(y\)}
 \end{align*}
as well as $ (\ref{identity 3456}) $. To obtain $Q_g^{(6)}>0$ in $\R^n$, it therefore suffices to prove the inequalities
\begin{align*} 
 &\(n - 2m + 2\)^2\(n - 2m + 4\)^2 \\
 &\quad\geq \(4m - n - 6\)\(n-3m+3\)\big(9\(n-2m+2\)^2\\
&\qquad+ \(3n-8m+6\)\(2n-5m+3\) - 3\(n-2m + 2\)\(3n-8m+6\)\big)\\
&\quad=\(4m - n - 6\)\(n-3m+3\)\(6n^2 -\(25m-21\)n + 28m^2 - 42m + 18\)
\end{align*}
and
$$6\(m+1\)\(n-2m + 2\) \geq 4\(n - 2m + 4\)\(n-3m+3\).$$
To obtain the first inequality, it is easy to see 
$$ 6n^2 - 25mn + 21n + 28m^2 - 42m + 18 \leq 7\(n - 2m + 4\)^2 $$
in the range $ 3m - 3 < n < 4m - 6 $ by writing
\begin{align*}
&7\(n - 2m + 4\)^2-\(6n^2 - 25mn + 21n + 28m^2 - 42m + 18\)\\
&\quad=n^2 - \(3m - 35\)n - 70m + 94\\
&\quad=\int_{3m-3}^n\(2t-3m+35\)dt+26m-2.
\end{align*}
We then conclude by noting 
\begin{align*}
&\(n - 2m + 2\)^2 - 7\(4m - n - 6\)\(n-3m+3\)\\
&\quad=8n^2-\(53m-67\)n+88m^2-218m+130\\
&\quad > 0,
\end{align*}
as its discriminant as a quadratic form in $n$ is negative for $ m \geq 4$. For the second inequality, since $ 3m - 3 < n < 4m - 6 $ and $m\ge4$, we have
\begin{align*}
&6\(m + 1\)\(n - 2m + 2\)-4\(n - 2m + 4\)\(n-3m+3\)\\
&\quad>6\(m + 1\)\(n - 2m + 2\)-4\(m-3\)\(n - 2m + 4\)\allowdisplaybreaks\\
&\quad=2\(m+9\)n-4m^2-40m+60\allowdisplaybreaks\\
&\quad>2\(m+9\)\(3m-3\)-4m^2-40m+60\allowdisplaybreaks\\
&\quad=2\(m+1\)\(m+3\)\\
&\quad>0.
\end{align*}
Therefore, in all cases a), b), and c), we obtain $Q_g^{\(6\)}>0$ in $\R^n$. This ends the proof of Theorem~\ref{Th1}.
\endproof

\section{Proof of Theorem~\ref{Th2}}\label{Sec4}

In this section, we prove Theorem~\ref{Th2} by again using the integral formula for $Q_g^{\(6\)}$ from Section~\ref{Sec2}:

\proof[Proof of Theorem~\ref{Th2}]
Let $n,m\in\N$ be such that $n > 2m\ge8$ and $P_{n,m}$ be as in \eqref{formula laplacian cubed}. For $\varepsilon>0$, let $ \rho_\varepsilon \in C^\infty_c\(\mathbb{R}^n\) $ be such that $ \rho_\varepsilon \geq 0 $, $ \int_{\R^n} \rho_\varepsilon\(x\) dx = 1 $ and $ \rho_\varepsilon\mathcal{L}\overset{\ast}{\rightharpoonup} \delta_0 $ weakly in the sense of measures as $ \varepsilon \to 0 $, where $\mathcal{L}$ is the Lebesgue measure in $\R^n$. For $r,\varepsilon>0$ and $x\in\R^n$, let 
$$ h_{r,\varepsilon}\(x\):= \varepsilon \big(1 + \left|x\right|^2\big)^{-\frac{n+2m}{2}} + r \rho_\varepsilon\(x -e_1\) + \rho_\varepsilon\(x + e_1\),$$
where $e_1:=\(1,0,\dotsc,0\)$. In particular, it is easy to see that
$$h_{r,\varepsilon}\mathcal{L}\overset{\ast}{\rightharpoonup}r \delta_{e_1} + \delta_{-e_1}$$
as $ \varepsilon \to 0 $ weakly in the sense of measures, where $\delta_{\pm e_1}$ is the Dirac measure at $\pm e_1$. Now let 
$$ u_{r,\varepsilon}\(x\):=\frac{\Gamma\(\frac{n}{2}-m+1\)}{2^{2m-1}\(m-1\)\,!\,\Gamma\(\frac{n}{2}\)\omega_{n-1}}\int_{\mathbb{R}^n} \left|x - y\right|^{2m - n} h_{r,\varepsilon}\(y\) dy .$$
Then $ u_{r,\varepsilon} $ is smooth and satisfies 
$$\(-\Delta\)^m u_{r,\varepsilon}= \frac{n-2m}{2}\,h_{r,\varepsilon}\quad\text{in }\R^n$$
and
$$ C_{r,\varepsilon}^{-1} \big(1 + \left|x\right|^2\big)^{\frac{2m - n}{2}} \leq  u_{r,\varepsilon}\(x\) \leq C_{r,\varepsilon} \big(1 + \left|x\right|^2\big)^{\frac{2m - n}{2}}$$
for some constant $C_{r,\varepsilon}>0$ depending on $n$, $m$, $r$ and $\varepsilon$. Let $g_{r,\varepsilon}$ be the metric on $\R^n$ defined by 
$$g_{r,\varepsilon} := u_{r,\varepsilon}^\frac{4}{n-2m}\left|dx\right|^2.$$
Then $Q_{g_{r,\varepsilon}}^{\(2m\)}=u_{r,\varepsilon}^{-\frac{n+2m}{n-2m}}h_{r,\varepsilon}$ satisfies
$$C_{r,\varepsilon}^{-1} \leq Q_{g_{r,\varepsilon}}^{\(2m\)}\leq C_{r,\varepsilon},$$
for some constant $C_{r,\varepsilon}>0$ depending on $n$, $m$, $r$ and $\varepsilon$. In particular, $ g_{r,\varepsilon} $ satisfies (A) for all $ \varepsilon > 0$. Thus Proposition~\ref{Pr} gives, 
\begin{align}\label{Th2Eq2}
Q_{g_{r,\varepsilon}}^{\(6\)}\(0\)&=2\(m-3\)\int_{\mathbb{R}^{6n}} \prod_{i=1}^6  \left|y_i\right|^{2m - n} P_{n,m}\(Z\(0,y\)\)d{\mu_{g_{r,\varepsilon}}^{\(2m\)}}\(y\)\nonumber\\
&\to{\(\frac{\Gamma\(\frac{n}{2}-m+1\)}{2^{2m-1}\(m-1\)\,!\,\Gamma\(\frac{n}{2}\)\omega_{n-1}}\)^6}\sum_{\alpha \in \{e_1, -e_1\}^6} r^{\theta_\alpha} P_{n,m}\(Z\(0,\alpha\)\)\nonumber\\
&=:{\(\frac{\Gamma\(\frac{n}{2}-m+1\)}{2^{2m-1}\(m-1\)\,!\,\Gamma\(\frac{n}{2}\)\omega_{n-1}}\)^6}K_{n,m}\(r\)
\end{align}
as $\varepsilon\to0$, where 
$$\theta_\alpha := \left|\{i \in \{1, \dotsc, 6\}:\ \alpha\(i\) = e_1 \}\right|.$$
We must show $K_{n,m}\(r\)<0$ for some $r>0$. Let $ a_1, \dots, a_8 $ be the coefficients of the polynomial $ P_{n,m} $ in the order stated in (\ref{formula laplacian cubed}).
Since $ Z_{i,j}\(0,\alpha\) = 4 $ if $ \alpha\(i\) \neq \alpha\(j\) $ and $ Z_{i,j}\(0,\alpha\) = 0 $ otherwise, due to symmetry, we note $ K_{n,m}\(r\) $ is a quintic polynomial of the form 
$$ K_{n,m}\(r\) = 64r\(b_0 + b_1 r + b_2 r^2 + b_1 r^3 + b_0 r^4\),$$ 
where
\begin{align*}
b_0 &:= 2a_1 + a_2 + a_4,\allowdisplaybreaks\\
b_1 &:= 8a_1 + 4a_2 + 4a_3 + 2a_4 + 2a_6 + 2a_7,\allowdisplaybreaks\\
b_2 &:= 12a_1 + 6a_2 + 8a_3 + 2a_4 + 4a_6 + 4a_7 + 8a_8. 
\end{align*}
For $ r > 0 $, we use the standard method of reducing the degree of a palindromic polynomial by considering 
\begin{align*}
r^{-3}K_{n,m}\(r\)&=64\(b_0\(r^2 + r^{-2}\) + b_1\(r + r^{-1}\) + b_2\)\\
&=64\(b_0\(r + r^{-1}\)^2 + b_1\(r + r^{-1}\) + b_2 - 2b_0\).
\end{align*}
It therefore suffices to analyze whether this quadratic in $r+r^{-1}$ attains a negative value in $ \[2, \infty\)$, which is the case if
\begin{equation}\label{Th2Eq3}
b_1^2 - 4b_0\(b_2 - 2b_0\)>0\quad\text{and}\quad-\frac{b_1}{2b_0}\ge2,\quad\text{i.e.}\quad b_1+4b_0\le0
\end{equation}
since 
$$b_0=8\(m-1\)\(m-2\)\(n-2m+2\)\(n-2m+4\)>0.$$
We compute
\begin{align*}
b_1+4b_0&=-8\(n-2m+2\)\big(n^3-\(12m-13\)n^2+\(39m^2-79m+38\)n\\
&\quad-2\(m-1\)\(22m^2-37m+4\)\big)
\end{align*}
and
$$b_1^2 - 4b_0\(b_2 - 2b_0\)=64\(n-2m+2\)\(n-3m+3\)\(n-4m+6\)\gamma_m\(n\),$$
where $\gamma_m$ is as in \eqref{Th2Eq1}. We let $N_m$ be the largest real root of $\gamma_m$. We compute 
$$\gamma_m\(4m-6\)=-96\(m-1\)^2\(m-2\)\(m-3\)^2<0.$$
Since the leading coefficient of $\gamma_m$ is positive and $2m-2<3m-3<4m-6$ for $m\ge4$, we then obtain {$N_m>4m-6$ and} $b_1^2 - 4b_0\(b_2 - 2b_0\)>0$ for all $n>N_m$. To show $b_1+4b_0\le0$, we consider the largest real root $N_m^\prime$ of
$$n^3-\(12m-13\)n^2+\(39m^2-79m+38\)n-2\(m-1\)\(22m^2-37m+4\)$$
as a polynomial in $n$.  A simple Euclidean division gives
\begin{align*}
\gamma_m\(n\)&=\(n^2-\(7m-6\)n-6\(m-1\)\(m-4\)\)\(n^3-\(12m-13\)n^2\right.\\
&\qquad\left.+\(39m^2-79m+38\)n-2\(m-1\)\(22m^2-37m+4\)\)\allowdisplaybreaks\\
&\quad-4\(m-1\)\(m-2\)\(\(23m-27\)n^2-2\(51m^2-106m+57\)n\right.\\
&\qquad\left.+4\(34m^3-89m^2+63m-6\)\).
\end{align*}
Moreover, the remainder polynomial in this division is negative for all $n\in\N$, which can be seen by noting $23m-27>0$ and
\begin{align*}
&4\(51m^2-106m+57\)^2-16\(23m-27\)\(34m^3-89m^2+63m-6\)\\
&\quad=-4\(527m^4-1048m^3-1642m^2+4728m-2601\)\allowdisplaybreaks\\
&\quad=-4\big(527\(m-2\)^4+3168\(m-2\)^3+4718\(m-2\)^2+2448\(m-2\)\\
&\qquad+335\big)\\
&\quad<0.
\end{align*}
Therefore, we obtain $\gamma_m\(N_m^\prime\)<0$, which implies $N_m^\prime<N_m$, and therefore if $n>N_m$, then both inequalities in \eqref{Th2Eq3} are satisfied, which in turn implies $K_{n,m}\(r\)<0$ for some $r>0$. Applying \eqref{Th2Eq2} then gives $Q_{g_{r,\varepsilon}}^{\(6\)}\(0\)<0$ provided $\varepsilon$ is sufficiently small. 

\smallskip
We now show $N_m\sim\Lambda m$ as $m\to\infty$ and $N_m\le\Lambda m$ for all $m\in\N$ with $m \geq 4$, where $\Lambda\simeq10.55$ is the largest real root of the polynomial $n^4-17n^3+83n^2-171n+140$. We let $\tilde{n}:=n/m$. We then write
\begin{align*}
\gamma_m\(n\)&=\(\tilde{n}-2\)\(\tilde{n}^4-17\tilde{n}^3+83\tilde{n}^2-171\tilde{n}+140\)m^5+\(19\tilde{n}^4-212\tilde{n}^3\right.\\
&\quad\ \left.+851\tilde{n}^2-1518\tilde{n}+1028\)m^4+4\(23\tilde{n}^3-163\tilde{n}^2+391\tilde{n}-320\)m^3\\
&\quad+4\(35\tilde{n}^2-144\tilde{n}+157\)m^2+48\(\tilde{n}-2\)m.
\end{align*}
In particular, considering the leading term, we obtain $N_m\sim\Lambda m$ as $m\to\infty$. Moreover, after analyzing the polynomials in $\tilde{n}$ appearing in the other terms, it turns out that all of them are positive for all $\tilde{n}\ge\Lambda$, which implies {$N_m<\Lambda m$} for all $m\in\N$ with $m \geq 4$.

\smallskip
Finally, we remark that for every $r,\varepsilon>0$, by stereographic projection, $g_{r,\varepsilon}$ extends to a smooth metric on $\S^n$. Indeed, if we let $\Phi_{x_0}:\S^n\setminus\left\{x_0\right\}\to\R^n$ be the stereographic projection from some pole $x_0\in\S^n$, then a standard computation using conformal invariance gives
$$u_{r,\varepsilon}\circ\Phi_{x_0}=\(\frac{2}{1+\left|\Phi_{x_0}\right|^2}\)^{\frac{n-2m}{2}}\int_{\S^n}G_{g_0}^{\(2m\)}\(\cdot,y\)\widehat{h}_{r,\varepsilon}\(y\)dv_{g_0}\(y\)$$
and
$$\(\Phi_{x_0}\)^{\ast}g_{r,\varepsilon}=\(\int_{\S^n}G_{g_0}^{\(2m\)}\(\cdot,y\)\widehat{h}_{r,\varepsilon}\(y\)dv_{g_0}\(y\)\)^{\frac{4}{n-2m}}g_0,$$
where $g_0$ is the standard metric on $\S^n$, $G_{g_0}^{\(2m\)}$ is the Green's function of the GJMS operator of order $2m$ on $\(\S^n,g_0\)$, $dv_{g_0}$ is the volume element of $\(\S^n,g_0\)$, and $\widehat{h}_{r,\varepsilon}$ is the smooth function on $\S^n$ defined by
\begin{align*}
\widehat{h}_{r,\varepsilon}&:=\(\frac{1+\left|\Phi_{x_0}\right|^2}{2}\)^{\frac{n+2m}{2}}h_{r,\varepsilon}\circ\Phi_{x_0}\\
&=2^{-\frac{n+2m}{2}}\big(\varepsilon +\big(1+\left|\Phi_{x_0}\right|^2\big)^{\frac{n+2m}{2}}\(r \rho_\varepsilon\(\Phi_{x_0}\(\cdot\) -e_1\) + \rho_\varepsilon\(\Phi_{x_0}\(\cdot\) + e_1\)\)\big).
\end{align*}
This ends the proof of Theorem~\ref{Th2}.
\endproof


\begin{thebibliography}{99}

\bib{AKRW}{article}{
   author={Andrade, J. H.},
   author={K\"onig, T.},  
   author={Ratzkin, J.},
   author={Wei, J.},     
   title={Quantitative stability of the total $Q$-curvature near minimizing metrics},
   note={arXiv:2407.06934},
   date={2024},
}

\bib{B}{article}{
   author={Branson, T. P.},
   title={Sharp inequalities, the functional determinant, and the complementary series},
   journal={Trans. Amer. Math. Soc.},
   volume={347},
   date={1995},
   number={10},
   pages={3671--3742},
}

\bib{BO}{article}{
   author={Branson, T. P.},
   author={\O rsted, B.},
   title={Explicit functional determinants in four dimensions},
   journal={Proc. Amer. Math. Soc.},
   volume={113},
   date={1991},
   number={3},
   pages={669--682},
}

\bib{CGS}{article}{
   author={Caffarelli, L. A.},
   author={Gidas, B.},
   author={Spruck, J.},
   title={Asymptotic symmetry and local behavior of semilinear elliptic equations with critical Sobolev growth},
   journal={Comm. Pure Appl. Math.},
   volume={42},
   date={1989},
   number={3},
   pages={271--297},
}

\bib{C}{article}{
   author={Case, J. S.},
   title={The Obata--V\'etois argument and its applications},
   journal={J. Reine Angew. Math.},
   volume={815},
   date={2024},
   pages={23--40},
}

\bib{CG}{article}{
   author={Case, J. S.},
   author={Gover, A. R.},
   title={The GJMS operators in geometry, analysis and physics},
   journal={J. Lond. Math. Soc. (2)},
   volume={113},
   date={2026},
   number={1, e70375},
}

\bib{CQY}{article}{
   author={Chang, S.-Y. A.},
   author={Qing, J.},
   author={Yang, P. C.},
   title={On the Chern--Gauss--Bonnet integral for conformal metrics on $\R^4$},
   journal={Duke Math. J.},
   volume={103},
   date={2000},
   number={3},
   pages={523--544},
}

\bib{CH}{article}{
   author={Chen, X.},
   author={Hou, F.},
   title={Remarks on GJMS operator of order six},
   journal={Pacific J. Math.},
   volume={289},
   date={2017},
   number={1},
   pages={35--70},
}

\bib{DHL}{article}{
   author={Djadli, Z.},
   author={Hebey, E.},
   author={Ledoux, M.},
   title={Paneitz-type operators and applications},
   journal={Duke Math. J.},
   volume={104},
   date={2000},
   number={1},
   pages={129--169},
}

\bib{ER}{article}{
   author={Esposito, P.},
   author={Robert, F.},
   title={Mountain pass critical points for Paneitz--Branson operators},
   journal={Calc. Var. Partial Differential Equations},
   volume={15},
   date={2002},
   number={4},
   pages={493--517},
}

\bib{Fa}{article}{
   author={Fang, H.},
   title={On a conformal Gauss--Bonnet--Chern inequality for LCF manifolds and related topics},
   journal={Calc. Var. Partial Differential Equations},
   volume={23},
   date={2005},
   number={4},
   pages={469--496},
}

\bib{FWX}{article}{
   author={Fazly, M.},
   author={Wei, J.},
   author={Xu, X.},
   title={A pointwise inequality for the fourth-order Lane--Emden equation},
   journal={Anal. PDE},
   volume={8},
   date={2015},
   number={7},
   pages={1541--1563},
}

\bib{FG1}{article}{
   author={Fefferman, C.},
   author={Graham, C. R.},
   title={Conformal invariants},
   journal={\'Elie Cartan et les math\'emati\-ques d'aujourd'hui -- Lyon, 25--29 juin 1984, Ast\'{e}risque},
   date={1985},
   number={S131},
   pages={95--116},
}

\bib{FG2}{book}{
   author={Fefferman, C.},
   author={Graham, C. R.},
   title={The ambient metric},
   series={Annals of Mathematics Studies},
   volume={178},
   publisher={Princeton University Press, Princeton, NJ},
   date={2012},
}

\bib{Fi}{article}{
   author={Finn, R.},
   title={On a class of conformal metrics, with application to differential geometry in the large},
   journal={Comment. Math. Helv.},
   volume={40},
   date={1965},
   pages={1--30},
}

\bib{GWW}{article}{
   author={Ge, Y.},
   author={Wang, G.},  
   author={Wei, W.},  
   title={A Yamabe problem for the quotient between the $Q$-curvature and the scalar curvature},
   note={arXiv:2603.15074},
   date={2026},
}

\bib{GKW}{article}{
   author={Gong, M.},
   author={Kim, S.},  
   author={Wei, J.},  
   title={Compactness and non-compactness theorems of the fourth-order and sixth-order constant $Q$-curvature problems},
   note={arXiv:2502.14237},
   date={2025},
}

\bib{GJMS}{article}{
   author={Graham, C. R.},
   author={Jenne, R.},
   author={Mason, L. J.},
   author={Sparling, G. A. J.},
   title={Conformally invariant powers of the Laplacian. I. Existence},
   journal={J. London Math. Soc. (2)},
   volume={46},
   date={1992},
   number={3},
   pages={557--565},
}

\bib{GHL}{article}{
   author={Gursky, M. J.},
   author={Hang, F.},
   author={Lin, Y.-J.},
   title={Riemannian manifolds with positive Yamabe invariant and Paneitz operator},
   journal={Int. Math. Res. Not. IMRN},
   date={2016},
   number={5},
   pages={1348--1367},
}

\bib{GM}{article}{
   author={Gursky, M. J.},
   author={Malchiodi, A.},
   title={A strong maximum principle for the Paneitz operator and a non-local flow for the $Q$-curvature},
   journal={J. Eur. Math. Soc. (JEMS)},
   volume={17},
   date={2015},
   number={9},
   pages={2137--2173},
}

\bib{HY1}{article}{
   author={Hang, F.},
   author={Yang, P. C.},
   title={Sign of Green's function of Paneitz operators and the $Q$ curvature},
   journal={Int. Math. Res. Not. IMRN},
   date={2015},
   number={19},
   pages={9775--9791},
}

\bib{HY2}{article}{
   author={Hang, F.},
   author={Yang, P. C.},
   title={$Q$-curvature on a class of manifolds with dimension at least 5},
   journal={Comm. Pure Appl. Math.},
   volume={69},
   date={2016},
   number={8},
   pages={1452--1491},
}

\bib{HR}{article}{
   author={Hebey, E.},
   author={Robert, F.},
   title={Compactness and global estimates for the geometric Paneitz equation in high dimensions},
   journal={Electron. Res. Announc. Amer. Math. Soc.},
   volume={10},
   date={2004},
   pages={135--141},
}

\bib{HRW}{article}{
   author={Hebey, E.},
   author={Robert, F.},
   author={Wen, Y.},
   title={Compactness and global estimates for a fourth order equation of critical Sobolev growth arising from conformal geometry},
   journal={Commun. Contemp. Math.},
   volume={8},
   date={2006},
   number={1},
   pages={9--65},
}

\bib{HS}{article}{
   author={Hyder, A.},
   author={Sire, Y,},
   title={Singular solutions for the constant $Q$-curvature problem},
   journal={J. Funct. Anal.},
   volume={280},
   date={2021},
   number={3},
   pages={Paper No. 108819, 39},
}

\bib{Li1}{article}{
   author={Li, M.},
   title={The total $Q$-curvature, volume entropy and polynomial growth polyharmonic functions},
   journal={Adv. Math.},
   volume={450},
   number={109768},
   date={2024},
}

\bib{Li2}{article}{
   author={Li, M.},
   title={The total $Q$-curvature, volume entropy and polynomial growth polyharmonic functions (II)},
   note={arXiv:2408.03640},
   date={2024},
}

\bib{Li3}{article}{
   author={Li, M.},
   title={Conformal metrics with finite total $Q$-curvature revisited},
   note={arXiv:2405.\break09872},
   date={2024},
}

\bib{LW}{article}{
   author={Li, M.},
   author={Wei, J.},
   title={A remark on the Case--Gursky--V\'etois identity and its applications},
   journal={Proc. Amer. Math. Soc.},
   volume={153},
   date={2025},
   number={8},
   pages={3417--3430},
}

\bib{LWX}{article}{
   author={Li, M.},
   author={Wei, J.},   
   author={Xu, X.},
   title={On geometry of $Q_g^{\(2k\)}$-curvature},
   note={arXiv:2506.20165},
   date={2025},
}

\bib{LX1}{article}{
   author={Li, Y.},
   author={Xiong, J.},
   title={Compactness of conformal metrics with constant $Q$-curvature. I},
   journal={Adv. Math.},
   volume={345},
   date={2019},
   pages={116--160},
}

\bib{LX2}{article}{
   author={Li, M.},
   author={Xu, X.},
   title={On positivity of the $Q$-curvatures of conformal metrics},
   journal={J. Funct. Anal.},
   volume={289},
   date={2025},
   number={8, 111011},
}

\bib{LX3}{article}{
   author={Li, M.},   
   author={Xu, X.},
   title={A sharp isoperimetric inequality and the top order $Q$-curvature},
   note={arXiv:2607.06951},
   date={2025},
}

\bib{Lin}{article}{
   author={Lin, C.-S.},
   title={A classification of solutions of a conformally invariant fourth order equation in ${\bf R}^n$},
   journal={Comment. Math. Helv.},
   volume={73},
   date={1998},
   number={2},
   pages={206--231},
}

\bib{M}{article}{
   author={Martinazzi, L.},
   title={Classification of solutions to the higher order Liouville's equation on $\R^{2m}$},
   journal={Math. Z.},
   volume={263},
   date={2009},
   number={2},
   pages={307--329},
}

\bib{MP}{article}{
   author={Mazumdar, S.},
   author={Premoselli, B.},   
   title={Compactness of conformal metrics with constant $Q$-curvature of higher order},
   note={arXiv:2603.06249},
   date={2026},
}

\bib{MN}{article}{
   author={Mazumdar, S.},
   author={Ndiaye, C. B.},   
   title={Barycenter technique for the higher order $Q$-curvature equation},
   note={arXiv:2510.00888},
   date={2025},
}

\bib{MV}{article}{
   author={Mazumdar, S.},
   author={V\'etois, J.},
   title={Existence results for the higher-order $Q$-curvature equation},
   journal={Calc. Var. Partial Differential Equations},
   volume={63},
   date={2024},
   number={6},
   pages={Paper No. 151, 29},
}

\bib{NX}{article}{
   author={Ndiaye, C. B.},
   author={Xiao, J.},
   title={An upper bound of the total $Q$-curvature and its isoperimetric deficit for higher-dimensional conformal Euclidean metrics},
   journal={Calc. Var. Partial Differential Equations},
   volume={38},
  date={2010},
   number={1-2},
   pages={1--27},
}

\bib{O}{article}{
   author={Obata, M.},
   title={The conjectures on conformal transformations of Riemannian manifolds},
   journal={J. Differential Geometry},
   volume={6},
   date={1971/72},
}

\bib{R1}{article}{
   author={Robert, F.},
   title={Positive solutions for a fourth order equation invariant under isometries},
   journal={Proc. Amer. Math. Soc.},
   volume={131},
   date={2003},
   number={5},
   pages={1423--1431},
}

\bib{R2}{article}{
   author={Robert, F.},
   title={Admissible $Q$-curvatures under isometries for the conformal GJMS operators},
   conference={
      title={Nonlinear elliptic partial differential equations},
   },
   book={
      series={Contemp. Math.},
      volume={540},
      publisher={Amer. Math. Soc., Providence, RI},
   },
   date={2011},
   pages={241--259},
}

\bib{V}{article}{
   author={V\'etois, J.},
   title={Uniqueness of conformal metrics with constant $Q$-curvature on closed Einstein manifolds},
   journal={Potential Anal.},
   volume={61},
   date={2024},
   number={3},
   pages={485--500},
}

\bib{WX}{article}{
   author={Wei, J.},
   author={Xu, X.},
   title={Classification of solutions of higher order conformally invariant equations},
   journal={Math. Ann.},
   volume={313},
   date={1999},
   number={2},
   pages={207--228},
}

\end{thebibliography}
\end{document}